\newtheorem{theorem}{Theorem}
\newtheorem{lemma}[theorem]{Lemma}
\newtheorem{proposition}[theorem]{Proposition}
\newtheorem{corollary}[theorem]{Corollary}
\newtheorem{example}[theorem]{Example}
\def\R{\sf R}
\def\J{\sf J}
\def\pa{\mbox{\sl pa}}
\def\ppa{\mbox{\footnotesize\sl pa}}
\def\c{\mbox{\sf c}}
\def\p{\mbox{\sf p}}
\def\u{\mbox{\sf u}}
\def\us{\mbox{\small\sf u}}
\def\usc{\mbox{\scriptsize\sf u}}
\def\boldeta{\mbox{\boldmath$\eta$}}
\def\sboldeta{\mbox{\footnotesize\boldmath$\eta$}}
\def\boldA{\mbox{\boldmath$A$}}
\def\boldB{\mbox{\boldmath$B$}}
\def\boldC{\mbox{\boldmath$C$}}
\def\boldD{\mbox{\boldmath$D$}}
\def\boldE{\mbox{\boldmath$E$}}
\def\boldF{\mbox{\boldmath$F$}}
\def\boldI{\mbox{\boldmath$I$}}
\def\boldzero{\mbox{\boldmath$0$}}
\def\bolda{\mbox{\boldmath$a$}}
\def\boldas{\mbox{\scriptsize\boldmath$a$}}
\def\boldb{\mbox{\boldmath$b$}}
\def\boldc{\mbox{\boldmath$c$}}
\def\boldd{\mbox{\boldmath$d$}}
\def\bolde{\mbox{\boldmath$e$}}
\def\boldf{\mbox{\boldmath$f$}}
\def\boldx{\mbox{\boldmath$x$}}
\def\bepa{b} 
\newenvironment{proof}{\begin{trivlist}\item[] \mbox{\it Proof. }}
{\hfill$\Box$ \end{trivlist}}
\def\ci{\perp\!\!\!\perp}
\def\dv{\mathbb}
\def\calA{{\cal A}}
\def\calB{{\cal B}}
\def\calC{{\cal C}}
\def\calI{{\cal I}}
\def\calK{{\cal K}}
\def\calL{{\cal L}}
\def\calP{{\cal P}(N)}
\definecolor{darkgreen}{RGB}{0,160,0}
\begin{document}
\title{On polyhedral approximations of polytopes for learning Bayes nets}

\author{Milan Studen\'{y}\footnote{Institute of Information Theory and Automation 
 of the ASCR,
studeny@utia.cas.cz} \and David Haws\footnote{University of Kentucky, Dept. of Statistics, dchaws@gmail.com}}

 \date{July 26, 2011}

\maketitle

 \begin{abstract}
 We review three vector encodings of Bayesian network structures.
 The first one has recently been applied by Jaakkola et al.\/
 \cite{Jaa10}, the other two use special integral vectors formerly introduced,
 called {\em imsets} \cite{Stu05,SHL10}. The
 central topic is the comparison of outer polyhedral approximations
 of the corresponding polytopes. We show how to transform the inequalities
 suggested by Jaakkola et al.\/ to the framework of imsets. The
 result of our comparison is the observation that the implicit
 polyhedral approximation of the standard imset polytope suggested
 in \cite{SV11} gives a closer approximation than the (transformed)
 explicit polyhedral approximation from \cite{Jaa10}. Finally, we
 confirm a conjecture from \cite{SV11} that the above-mentioned implicit
 polyhedral approximation of the standard imset polytope is an LP
 relaxation of the polytope.
 \end{abstract}

 \section{Introduction}

 {\em Bayesian networks\/} (BNs) are popular graphical statistical
 models widely used both in probabilistic reasoning \cite{Pea88} and
 statistics \cite{Lau96}. They are attributed to acyclic directed graphs
 whose nodes correspond to the variables in consideration. The motivation for
 this report is learning the BN structure \cite{Nea04} from data by maximizing a
 quality (= scoring) criterion. The criterion is a real function of a
 BN structure (= of a graph) and of a database; its value says how much
 the BN structure given by the graph is good to explain the occurrence of
 the database.

 However, different (acyclic directed) graphs can define the same
 statistical model, in which case the graphs are {\em Markov equivalent}. Thus,
 a usual requirement on the criterion is that it should be {\em score
 equivalent}, which means, it ascribes the same value to equivalent graphs.
 Another traditional technical requirement is that the criterion should be
 {\em decomposable\/} -- for details see \cite{Chi02}.

 Since the aim is learning the BN structure (= statistical model) some researchers
 prefer to have a unique representative for every BN structure and to
 understand the criterion as a function of such unique representatives.
 A traditional unique graphical representative of the BN structure is the
 {\em essential graph\/} of the corresponding Markov equivalence class
 of acyclic directed graphs, which is a special graph allowing both
 directed and undirected edges -- for details see \cite{AMP97}.

 The basic idea of an algebraic approach to learning, proposed
 in connection with conditional independence structures \cite{Stu05},
 is to represent every BN structure by a certain integral vector
 (= a vector with integers as components), called the {\em standard imset}.
 This is also a unique BN representative. The advantage of this 
 algebraic approach is that every score equivalent and decomposable criterion
 becomes an affine function of the standard imset.

 It has been shown in \cite{SVH10} that the standard imsets
 are vertices of a certain polytope, called the {\em standard imset
 polytope}. This allows one to re-formulate the learning task as a
 {\em linear programming\/} (LP) problem. However, to apply standard
 LP methods one needs the polyhedral description of the
 polytope. In \cite{SV11}, a conjecture about an implicit polyhedral
 characterization of the standard imset polytope has been presented. The
 weaker version of the conjecture was that the polyhedron given by
 those inequalities is an LP relaxation of the polytope.

 Suitable transformation of an LP problem often simplifies things.
 Therefore, in \cite{SHL10}, an alternative algebraic representative for
 the BN structure, called the {\em characteristic imset}, has been introduced.
 It is obtained from the standard imset by an invertible affine
 transformation; however, unlike the standard imset, the characteristic
 imset is always a zero-one vector. This opens the way to the application of
 advanced methods of {\em integer programming\/} (IP) in this area.
 Nonetheless, the crucial question of polyhedral characterization of the
 (trans\-for\-med) polytope remain to be answered.

 Jaakkola et al.\/ \cite{Jaa10} have also proposed to apply the methods of
 linear and integer programming to learning BN structures. They have used a
 straightforward zero-one encoding of acyclic directed graphs and transformed
 the task of maximizing the quality criterion to an IP problem. The
 main difference is that their vector codes are not unique BN
 representatives. On the other hand, they provide an explicit
 polyhedral LP relaxation of their polytope, which allows one to use
 the methods of IP.

 In this report, we transform the inequalities suggested by Jaakkola et al.\/
 to the framework of imsets. First, we show that the implicit
 polyhedral approximation of the standard imset polytope suggested
 in \cite{SV11} gives a closer approximation than the (transformed)
 explicit polyhedral approximation from \cite{Jaa10}. Second, we
 show that the transformed inequalities give an explicit LP relaxation
 of the standard/characteristic imset polytope. A consequence of
 this fact is the proof of the weaker version of the conjecture from \cite{SV11}.

 \section{Notation and terminology}

 Throughout the paper $N$ is a finite set of {\em variables\/}
 which has least two elements: $|N|\geq 2$. Its power set,
 denoted by $\calP$, is the class of its subsets $\{ A;\ A\subseteq N\}$.
 For any $\ell =1,2$, we use a special notation
 $$
 {\cal P}_{\ell}(N)\equiv \{A\subseteq N;\ |A|\geq \ell\}
 $$
 for the class of subsets of $N$ of cardinality at least $\ell$. The symbol
 $U\subset V$ will mean $U\subseteq V$, $U\neq V$.

 We deal with directed graphs (without loops) having $N$ as the set of nodes
 and call them {\em directed graphs over $N$}.
 Such a graph is specified by a collection of arrows $j\rightarrow i$,
 where $i,j\in N$, $i\neq j$; the set
 $\pa_{G}(i)\equiv\{ j\in N;\, j\rightarrow i\}$ is (called) the set
 of {\em parents\/} of node $i\in N$. A directed cycle in $G$ is
 a sequence of nodes $i_{1},\ldots, i_{n}$, $n\geq 3$ such that
 $i_{r}\rightarrow i_{r+1}$ in $G$ for $r=1,\ldots , n-1$ and
 $i_{n}=i_{1}$. A directed graph is {\em acyclic\/} if it has no
 directed cycle. A well-known equivalent definition is that
 there exists an ordering $i_{1},\ldots i_{|N|}$ of nodes of $G$
 {\em consistent with the direction of arrows\/} in $G$, which
 means $i_{r}\rightarrow i_{s}$ in $G$ implies $r<s$. Clearly, every
 acyclic directed graph $G$ has at least one {\em initial node},
 that is, a node $i$ with  $\pa_{G}(i)=\emptyset$.

 We also deal with real vectors, elements of ${\dv R}^{M}$, where $M$
 is a non-empty finite set.  By {\em lattice points}
 in ${\dv R}^{M}$ we mean integral vectors, that is,
 vectors whose components are integers (= elements of ${\dv Z}^{M}$).
 In this paper, $M$ has additional structure;
 typically, it is $\calP$ or ${\cal P}_{2}(N)$, in
 which cases the lattice points are called {\em imsets}. To
 write formulas for imsets we will use the following notation:
 given $A\subseteq N$, the corresponding {\em basic vector\/}
 will be denoted by $\delta_{A}$:
 $$
 \delta_{A}(S) = \left\{ \begin{array}{ll}
 1 & ~~\mbox{\rm if}~~ S=A\,,\\
 0 & ~~\mbox{\rm if}~~ S\subseteq N,\; S\neq A\,.
 \end{array} \right.
 $$
 A special {\em semi-elementary imset\/} $\u_{\langle A,B|C\rangle}$ is associated with any
 (ordered) triplet of pairwise disjoint sets $A,B,C\subseteq N$:
 $$
 \u_{\langle A,B|C\rangle} \equiv
 \delta_{C}-\delta_{A\cup C}-\delta_{B\cup C}+\delta_{A\cup B\cup C}\,,
 $$
 which, in the context of \cite{Stu05}, encodes the corresponding
 conditional independence statement $A\ci B\,|\,C$. The imsets
 will be denoted using sans serif fonts, e.g.\ $\u$ or $\c$;
 general vectors by bold lower-case letters, e.g.\ $\boldb$ or
 $\boldeta$. They are interpreted as column vectors.

 Matrices will be denoted by bold capitals, e.g.\ $\boldA$ or
 $\boldC$. The symbol $\boldA^{\top}$ denotes
 the transpose of $\boldA$. An invertible matrix $\boldA$ is {\em
 unimodular\/} if it is integral (= has integers as entries) and its determinant
 is $+1$ or $-1$ (see \S\,4.1 in \cite{Sch86}); an equivalent definition is
 that both $\boldA$ and its inverse $\boldA^{-1}$ are integral,
 that is, the mappings $\boldb\mapsto\boldA\boldb$ and
 $\boldc\mapsto{\boldA}^{-1}\boldc$ ascribe lattice points to lattice points.

 By a full row rank matrix we mean an $m\times n$-matrix which
 has $m$ linearly independent columns (= has rank $m$).
 The concept of unimodularity was extended in \S\,19.1 of
 \cite{Sch86} to matrices of this kind. A full row rank $m\times n$ matrix
 $\boldA$ is {\em unimodular\/} if every $m\times m$-submatrix
 has determinant $+1$, $-1$ or $0$; equivalently, if any of its
 invertible $m\times m$-submatrix $\boldB$ is unimodular.
 A matrix $\boldA$ is {\em totally unimodular\/} if any of its
 (square) submatrix has determinant $+1$, $0$ or $-1$.

 We also deal with special classes of subsets of $N$.
 More specifically, we will consider non-empty classes
 $\calA$ of non-empty subsets of $N$ which are {\em closed
 under supersets}. These are classes
 $\emptyset\neq\calA\subseteq{\cal P}_{1}(N)$ satisfying
 $$
 S\in\calA\,, ~~ S\subseteq T\subseteq N
 \quad\Rightarrow\quad T\in\calA\,.
 $$
 Every such class $\calA$ is characterized by the class $\calA_{\min}$
 of its {\em mimimal sets\/} with respect to inclusion:
 $$
 \calA_{\min}\equiv \{ S\in\calA ;\ \forall\,T\subset S\quad
 T\not\in\calA\}\,.
 $$
 Of course, $\calI =\calA_{\min}$ is a non-empty subclass of
 ${\cal P}_{1}(N)$ consisting of {\em incomparable sets}, which means
 $$
 \forall\, S,T\in\calI,\qquad  S\neq T ~~\Rightarrow ~~ [\,S\setminus T\neq\emptyset
 ~\,\&~\, T\setminus S\neq\emptyset\,]\,.
 $$
 Conversely, given a non-empty class $\calI\subseteq{\cal P}_{1}(N)$
 of incomparable sets the corresponding class $\calA$ closed under
 supersets satisfying $\calI =\calA_{\min}$ is as follows:
 $$
 \calA=\{S\subseteq N;\ \exists\,T\in\calI\quad T\subseteq S\}\,.
 $$

 Finally, in the proofs, we sometimes use Dirac's delta-symbol
 to shorten the notation. Specifically, the notation $\delta(\star\star)$, where
 $\star\star$ is a predicate (= statement), means a zero-one function whose
 value is $+1$ if the statement $\star\star$ is valid and whose
 value is $0$ if the statement $\star\star$ does not hold.

 \section{Three ways of encoding Bayes nets}

 \subsection{Straightforward zero-one encoding of a directed
 graph}\label{ssec.straight}

 Jaakkola et al.\/ \cite{Jaa10} used a special method for vector encoding
 (acyclic) directed graphs over $N$. Their 0-1-vectors \boldeta\ have components
 indexed by pairs $(i|B)$, where $i\in N$ and $B\subseteq N\setminus \{ i\}$.
 Although their intention was to encode acyclic directed graphs only, one can
 formally encode any directed graph in this way. Specifically,
 given a directed graph $G$ over $N$, the vector $\boldeta_{G}$ encoding $G$
 is defined as follows:
 $$
 \eta_{G}(i|B)=1 ~~\Leftrightarrow ~~ B=\pa_{G}(i), \qquad
 \eta_{G}(i|B)=0 ~~\mbox{otherwise}.
 $$

 \begin{example}\label{exa.1}\rm
 Consider $N=\{a,b,c\}$ and
 $G:a\leftrightarrows b\leftarrow c$. It is a directed graph,
 but not an acyclic one. We have $\pa_{G}(a)=\{ b\}$, $\pa_{G}(b)=\{a,c\}$,
 $\pa_{G}(c)=\emptyset$. Thus,  $\eta_{G}(a|\{b\})= 1$,
 $\eta_{G}(b|\{a,c\})= 1$, $\eta_{G}(c|\emptyset )= 1$, and
 $\eta_{G}(i|B)= 0$ otherwise.
 \end{example}

 The polytope studied by Jaakkola et. al.\/ \cite{Jaa10} is defined as the
 convex hull of the set of vectors $\boldeta_{G}$, where $G$ runs over all
 acyclic directed graphs over $N$.

 \subsubsection{Jaakkola et al.'s polyhedral
 approximation}\label{sssec.jaa-approx}
 The (outer) polyhedral approximation $\J$ of the above polytope proposed
 in \cite{Jaa10} is given by the following constraints:
 \begin{itemize}
 \item ``simple" {\em non-negativity constraints}:
 \begin{equation}
 \eta(i|B)\geq 0 \quad
 \mbox{\rm for every}~ i\in N,\, B\subseteq N\setminus\{i\}
 \label{eq.jaa-non-neg}
 \end{equation}
 ($|N|\cdot 2^{|N|-1}$ inequality constraints),
 \item {\em equality constraints}:
 \begin{equation}
 \sum_{B\subseteq N\setminus\{ j\}} \eta(j|B)=1 \quad
 \mbox{\rm for all}~ j\in N
 \label{eq.jaa-equal}
 \end{equation}
 ($|N|$ equality constraints),
 \item {\em cluster inequalities}, which correspond to sets
 $C\subseteq N$, $|C|\geq 2$:
 \begin{equation}
 1\leq \sum_{i\in C} ~\sum_{B\subseteq N\setminus\{ i\},\, B\cap C=\emptyset}
 \eta(i|B)
 \equiv
 \sum_{i\in C} ~\sum_{D\subseteq N\setminus C} \eta(i|D)
 \label{eq.cluster-1}
 \end{equation}
 ($2^{|N|}-|N|-1$ cluster inequalities).
 \end{itemize}
 Taking into account the equality constraints (\ref{eq.jaa-equal})
 for $i\in C$, (\ref{eq.cluster-1}) takes the form
 $$
 1\leq
 \sum_{i\in C} ~\left[\,1- \sum_{B\subseteq N\setminus \{ i\},\, B\cap C\neq\emptyset}
 \eta(i|B)\,\right]\,.
 $$

 \noindent {\sl Remark} No cluster inequality for $C=\emptyset$ is
 defined; the cluster inequalities for $|C|=1$ are omitted because they follow
 trivially from the equality constraints.

 \begin{example}\label{exa.2}\rm
 In case $N=\{a,b,c\}$ every $\boldeta$-vector has length 12 and
 its components decompose into three blocks that correspond to variables $a$, $b$
 and $c$. Thus, one has twelve non-negativity constraints, three equality
 constraints and four cluster inequalities of two types:
 \begin{itemize}
 \item $1\leq \eta (a|\emptyset ) +\eta (a|\{ c\}) +\eta (b|\emptyset ) +\eta (b|\{
 c\})$, \hfill (for $C=\{ a,b\}$)
 \item $1\leq \eta (a|\emptyset ) +\eta (b|\emptyset ) +\eta (c|\emptyset
 )$. \hfill (for $C=\{ a,b,c\}$)
 \end{itemize}
 \end{example}

 The constraints (\ref{eq.jaa-non-neg}) and (\ref{eq.jaa-equal}) are
 clearly valid for any vector $\boldeta_{G}$ of a directed graph $G$;
 the inequalities (\ref{eq.cluster-1}) hold in the acyclic case --
 see Lemma \ref{lem.2}.

 \subsubsection{Jaakkola et al.'s approximation is an LP relaxation}\label{sssec.jaa-relax}

 The polyhedral approximation from \S\,\ref{sssec.jaa-approx} is
 an {\em LP relaxation\/} of the corresponding polytope, by which we mean that
 the only lattice points in the approximation are the lattice points in the polytope.
 First, we observe that the polyhedron  $\J^{'}$ given by non-negativity
 and equality constraints is an integral polytope.

 \begin{lemma}\label{lem.1}
 Let $\J^{\prime}$ be the polyhedron given by (\ref{eq.jaa-non-neg}) and (\ref{eq.jaa-equal}).
 Then $\J^{\prime}$ is a polytope whose vertices are just the codes of (general) directed
 graphs over $N$. Moreover, the only lattice points in $\J^{\prime}$ are its vertices.
 \end{lemma}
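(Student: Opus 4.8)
The plan is to recognize $\J^{\prime}$ as a Cartesian product of standard simplices, one for each node, and to read off its vertices and its lattice points from this product structure. Let $M=\{(i|B);\ i\in N,\ B\subseteq N\setminus\{i\}\}$ denote the index set of the vectors $\boldeta$, and partition it into the blocks $M_{i}=\{(i|B);\ B\subseteq N\setminus\{i\}\}$, $i\in N$. The first step is to observe that both defining families of constraints decouple across these blocks: each non-negativity constraint (\ref{eq.jaa-non-neg}) restricts a single coordinate, and each equality constraint (\ref{eq.jaa-equal}) for $j\in N$ involves only the coordinates indexed by $M_{j}$. Hence $\J^{\prime}=\prod_{i\in N}\Delta_{i}$, where $\Delta_{i}\subseteq{\dv R}^{M_{i}}$ is the standard simplex of vectors with non-negative coordinates summing to $1$. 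Since every coordinate of a point of $\Delta_{i}$ lies in $[0,1]$, each $\Delta_{i}$ is bounded, so the product is bounded; being cut out by finitely many linear constraints, $\J^{\prime}$ is therefore a polytope.

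Next I would identify the vertices. Each $\Delta_{i}$ is an integral polytope whose vertices are exactly the $2^{|N|-1}$ unit vectors of ${\dv R}^{M_{i}}$, one for every $B\subseteq N\setminus\{i\}$. Using the standard fact that the vertices of a Cartesian product of polytopes are precisely the tuples of vertices of the factors, a vertex of $\J^{\prime}$ corresponds to a choice of one subset $B_{i}\subseteq N\setminus\{i\}$ for each $i\in N$. But such a choice is exactly the specification of a directed graph $G$ over $N$ via $\pa_{G}(i)=B_{i}$, and the associated product vertex is by definition $\boldeta_{G}$. This yields the claimed bijection between the vertices of $\J^{\prime}$ and the codes of general directed graphs over $N$.

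Finally, for the lattice points I would argue directly. If $\boldeta\in\J^{\prime}$ is integral, then within each block $M_{i}$ the coordinates are non-negative integers summing to $1$, so exactly one of them equals $1$ and all the others vanish. Thus $\boldeta=\boldeta_{G}$ for the graph $G$ with $\pa_{G}(i)$ equal to the unique $B$ satisfying $\eta(i|B)=1$. Consequently every lattice point of $\J^{\prime}$ is a graph code, and since all graph codes are vertices, the only lattice points are the vertices.

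I expect no serious obstacle here; the only step deserving care is the first one, namely verifying that the constraints genuinely decouple so that the product description $\J^{\prime}=\prod_{i\in N}\Delta_{i}$ is legitimate, after which everything follows from standard facts about products of simplices together with the one-$1$-per-block description of integral points. One could equivalently bypass the product-vertex fact by noting that each graph code is a $0$--$1$ point of $\J^{\prime}\subseteq[0,1]^{M}$ and hence a vertex, while integrality of the product of simplices forces every vertex of $\J^{\prime}$ to be a lattice point and therefore a graph code.
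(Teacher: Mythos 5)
Your proposal is correct and follows essentially the same route as the paper's proof: both decompose the coordinates into blocks indexed by the nodes, recognize each block's constraints as cutting out a probability simplex, identify vertices of the product with tuples of simplex vertices (equivalently, graph codes), and show a lattice point must have exactly one entry equal to $1$ per block. Your write-up is a bit more explicit about the product structure and boundedness, but there is no substantive difference.
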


 \begin{proof}
 Let \boldeta\ belong to $\J^{\prime}$. For every block of components of $\boldeta$
 corresponding to $i\in N$, the constraints define a vector in a ``probability simplex".
 Assuming \boldeta\ is a vertex of $\J^{\prime}$, for each $i\in N$, the
 respective block has to be a vertex of that simplex, that is, a 0-1-vector having
 just one component 1. If $B(i)$ is the set indexing such a component for $i\in N$,
 we get the corresponding graph $G$ with $\boldeta =\boldeta_{G}$ by drawing arrows
 from the elements of $B(i)$ to $i$, for every $i\in N$.
 Clearly, this defines a one-to-correspondence between
 (general) directed graphs over $N$ and vertices of $\J^{\prime}$.

 Let $\boldeta$ be a lattice point in $\J^{\prime}$. Within the block
 given by $i\in N$, components are non-negative integers. Thus, if one
 of them exceeds 1, the sum exceeds 1. Hence, \boldeta\ is a 0-1-vector.
 At most one component in a block is 1 since otherwise the sum exceeds 1, and
 at least one is 1 since otherwise the sum is 0.
 \end{proof}

 \begin{lemma}\label{lem.2}
 Let $\J$ be the polyhedron given by constraints (\ref{eq.jaa-non-neg})-(\ref{eq.cluster-1}).
 Then the lattice points in $\J$ are exactly the codes of acyclic
 directed graphs over $N$.
 \end{lemma}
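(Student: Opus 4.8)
The plan is to build on Lemma \ref{lem.1} and reduce the statement to a purely graph-theoretic characterization of acyclicity. Since $\J$ is obtained from $\J^{\prime}$ by adding the cluster inequalities (\ref{eq.cluster-1}), we have $\J\subseteq\J^{\prime}$, so every lattice point in $\J$ is a lattice point in $\J^{\prime}$. By Lemma \ref{lem.1} these are exactly the codes $\boldeta_{G}$ of (general) directed graphs over $N$. Thus it suffices to decide, for each directed graph $G$, whether its code $\boldeta_{G}$ satisfies all the cluster inequalities, and to show that this happens precisely when $G$ is acyclic.

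First I would rewrite the right-hand side of (\ref{eq.cluster-1}) for a graph code. Since $\eta_{G}(i|D)=1$ exactly when $D=\pa_{G}(i)$, the inner sum $\sum_{D\subseteq N\setminus C}\eta_{G}(i|D)$ equals $\delta(\pa_{G}(i)\cap C=\emptyset)$, so the cluster inequality for $C$ reduces to
$$
1\leq \sum_{i\in C}\delta(\pa_{G}(i)\cap C=\emptyset)\,.
$$
In words, the inequality for $C$ holds iff at least one node $i\in C$ has no parent inside $C$. Therefore $\boldeta_{G}\in\J$ iff, for every $C\subseteq N$ with $|C|\geq 2$, the subgraph of $G$ induced on $C$ contains a node whose parents all lie outside $C$.

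The core of the argument is then the equivalence between this condition and acyclicity of $G$. For the direction ``acyclic $\Rightarrow$ all cluster inequalities hold'', I would observe that the subgraph of $G$ induced on any $C$ is again acyclic (a directed cycle inside $C$ would be one in $G$), and hence, as recalled in the excerpt, possesses an initial node, that is, a node $i\in C$ with $\pa_{G}(i)\cap C=\emptyset$; this supplies the required term $1$ in the sum. For the converse, ``not acyclic $\Rightarrow$ some cluster inequality fails'', I would take a directed cycle $i_{1}\to\cdots\to i_{n}=i_{1}$ and let $C$ be its set of nodes; since $n\geq 3$ we have $|C|\geq 2$, and every node on the cycle has its cyclic predecessor as a parent lying in $C$, so no term of the sum equals $1$ and the inequality for this $C$ is violated.

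The main obstacle is essentially packaged into the forward direction: one must justify that restricting an acyclic graph to a subset of its nodes preserves acyclicity and hence the existence of an initial node within each cluster. This is routine, but it is the step that actually connects the combinatorial reading of the cluster inequalities to the global acyclicity condition; once it is in place, both implications follow directly from the reformulation above.
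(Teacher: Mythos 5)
Your proof is correct, and its first half (reduction to Lemma \ref{lem.1}, the combinatorial reading of the cluster inequality for a graph code, and the forward direction via initial nodes of induced subgraphs) matches the paper's argument. Where you diverge is in the converse direction. The paper proves ``all cluster inequalities hold $\Rightarrow$ $G$ acyclic'' constructively: it applies the cluster inequality to $C=N$, peels off an initial node $i_{1}$, then applies it to $C=N\setminus\{i_{1}\}$, and so on, building an ordering of the nodes consistent with the arrows, which certifies acyclicity. You instead prove the contrapositive ``$G$ cyclic $\Rightarrow$ some cluster inequality fails'' by taking $C$ to be the node set of a directed cycle and noting that every node of $C$ then has a parent inside $C$, so the sum in (\ref{eq.cluster-1}) is zero. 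Both arguments are sound (in yours, $|C|\geq 2$ is guaranteed because the graph has no loops, and every node occurring in the cycle indeed has its predecessor as a parent in $C$). Your version is shorter and pinpoints a single violated inequality; the paper's version has the side benefit of showing that, for a vector already known to encode a directed graph, only the $|N|-1$ cluster inequalities along a nested chain of sets are needed to force acyclicity, which is a mildly stronger structural remark. Either route is acceptable here.
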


 \begin{proof}
 Every lattice point in $\J$ is a lattice point in $\J^{\prime}$, and,
 therefore, by Lemma \ref{lem.1}, encodes a (uniquely determined)
 directed graph $G$.

 Consider the cluster equality (\ref{eq.cluster-1}) for $C\subseteq N$,
 $|C|\geq 2$ and the vector $\boldeta_{G}$ (encoding a directed graph $G$).
 For every $i\in C$, the $\eta_{G}(i|D)$ term is typically 0 and only once 1, namely in
 the case $D=\pa_{G}(i)$. Thus, the inner expression for $i$ in
 (\ref{eq.cluster-1}), namely $\sum_{D\subseteq N\setminus C}
 \eta_{G}(i|D)$ is either 0 or 1. The latter happens if and only if
 $\pa_{G}(i)\cap C=\emptyset$. That means, the cluster inequality
 for $C$ says there exists at least one $i\in C$ with
 $\pa_{G}(i)\cap C=\emptyset$. Of course, this is true if $G$ is
 acyclic.

 Now, we are going to show the converse: the cluster inequalities
 for $\boldeta_{G}$ imply that $G$ is acyclic. We
 start with applying the cluster inequality for $C=N$ and
 find $i_{1}\in N$ with $\pa_{G}(i_{1})=\emptyset$. Thus, $i_{1}$ is an
 initial node in $G$ and we fix it. If $|N\setminus\{ i_{1}\}|\geq 2$
 we take $C=N\setminus\{ i_{1}\}$ and apply the cluster inequality
 for it. It says there exists $i_{2}\in C=N\setminus\{ i_{1}\}$
 with $\pa_{G}(i_{2})\cap C=\emptyset$, that is,
 $\pa_{G}(i_{2})\subseteq \{ i_{1}\}$ ($\equiv$ $i_{2}$ is the
 initial node in the induced subgraph $G_{N\setminus\{ i_{1}\}}$).

 Again, if $|N\setminus\{ i_{1},i_{2}\}|\geq 2$ we continue with
 $C=N\setminus\{ i_{1},i_{2}\}$, and so on. In this way, we find
 iteratively an ordering $i_{1},\ldots i_{|N|}$ consistent with the
 direction of arrows in $G$. This already implies $G$ is acyclic.
 \end{proof}

 \subsection{Standard imsets}

 Standard imsets introduced \cite{Stu05} have components indexed by subsets $T\subseteq N$.
 Given an acyclic directed graph $G$ over $N$, the {\em standard imset\/} $\u_{G}$
 encoding $G$ is defined as follows:
 $$
 \u_{G}= \delta_{N} - \delta_{\emptyset} +
 \sum_{i\in N}\,\left[\,\delta_{\ppa_{G}(i)} - \delta_{\{ i\}\cup\ppa_{G}(i)}\,\right].
 $$
 A basic property of standard imsets is that they are unique
 representatives of Bayesian network structures. This means, one has
 $\u_{G}=\u_{H}$ if and only if $G$ and $H$ are independence equivalent acyclic
 directed graphs (= define the same Bayesian network structure) --
 see Corollary 7.1 in \cite{Stu05}. In \cite{SVH10}, it was
 proposed to study the {\em standard imset polytope}, defined as
 the convex hull of the set of vectors $\u_{G}$, where $G$ runs over all
 acyclic directed graphs with $N$ vertices.

 \subsubsection{Outer approximation of the standard imset polytope}\label{sssec.our-approx}

  In \cite{SV11}, an outer approximation of the standard imset
  polytope in terms of linear constraints was suggested. More
  specifically, three types of constraints were considered
  (for $\u =\u_{G}$):
  \begin{itemize}
  \item {\em equality contraints}:
  \begin{equation}
  \sum_{T\subseteq N} \u(T)=0, \qquad
  \forall\, j\in N ~~  \sum_{T\subseteq N,\, j\in T} \u (T)=0\,,
  \label{eq.standardize}
  \end{equation}
  which implies that $\u$-vectors are determined uniquely by their
  components $\u (T)$ for $T\subseteq N$, $|T|\geq 2$,
  \item {\em specific inequality contraints} of the form:
  \begin{equation}
  \sum_{T\in {\cal A}} \u (T)\leq 1\,,
  \label{eq.specific-con}
  \end{equation}
  where ${\cal A}$ is a non-empty class of
  non-empty subsets of $N$, closed under supersets,
  \item {\em non-specific inequality contraints} of the form:
  \begin{equation}
  \langle m,\u\rangle\equiv \sum_{T\subseteq N} m(T)\cdot \u (T)\geq 0\,,
  \label{eq.non-specific-con}
  \end{equation}
  where $m$ is a (representative on an extreme
  standardized) supermodular function. Here, by a
  {\em supermodular function} is meant a real function
  $m$ on the power set ${\cal P}(N)$
  ($\equiv$ a vector in ${\dv R}^{{\cal P}(N)}$)
  such that
  $$
  m(E\cup F) + m(E\cap F)\geq m(E) + m(F)
  \quad \mbox{\rm for every $E,F\subseteq N$}.
  $$
  It is {\em standardized} if $m(T)=0$ whenever $|T|\leq 1$.
  \end{itemize}
  Note that the class of standardized supermodular functions on ${\cal P}(N)$ is
  a pointed rational polyhedral cone, and, therefore, has finitely
  many extreme rays. Each extreme ray contains a uniquely determined non-zero
  lattice point whose components have no common prime divisor (this
  is the representative of the extreme ray). Therefore, (\ref{eq.non-specific-con})
  gives in fact finitely many linear inequality constraints on $\u
  =\u_{G}$. The problem is that one has to compute those
  representatives of extreme supermodular functions, which is
  a difficult computational task. The representatives were computed
  for $|N|\leq 5$ \cite{SBK00}.

  Thus, in comparison with the polyhedral approximation (of the $\boldeta$-polytope)
  mentioned in \S\,\ref{sssec.jaa-approx}, this polyhedral
  approximation (of the standard imset polytope) is implicit.
  This is a disadvantage from the practical point of view because
  to apply common methods of linear programming one still needs to explicate
  the considered inequality constraints for any $|N|$.

 \begin{example}\label{exa.3}\rm
 In case $N=\{a,b,c\}$ every $\u$-vector has the length 8. There are four equality constraints
 (\ref{eq.standardize}) which break into two types:
 \begin{itemize}
 \item $\u (\emptyset )= -\u (a) -\u (b) -\u (c)
 -\u (\{ a,b\}) -\u (\{ a,c\}) -\u (\{ b,c\}) -\u (\{ a,b,c\})$,
 \item $\u (a)= -\u (\{ a,b\}) -\u (\{ a,c\})-\u (\{ a,b,c\})$.
 \hfill (for $j=a$)%
 \end{itemize}
 Therefore, the dimension (of the standard imset polytope) is
 4 and the $\u$-vectors are determined by their components for sets $\{
 a,b\}$, $\{ a,c\}$, $\{ b,c\}$ and $\{ a,b,c\}$.

 As concerns specific inequality constraints, every non-empty class of ${\cal A}$
 of non-empty subsets of $N$ closed under supersets is uniquely
 determined by the class ${\cal A}_{\min}$ of its minimal sets with
 respect to inclusion. One has eighteen such classes which break into eight
 types. For example, ${\cal A}_{\min}=\{ ab,ac,bc\}$ gives the inequality
 $$
 \u (\{ a,b\})  +\u (\{ a,c\}) +\u (\{ b,c\}) +\u (\{ a,b,c\})\leq
 1\,.
 $$
 As concerns non-specific inequality constraints, the cone of
 standardized supermodular functions has five extreme rays in case
 $|N|=3$ \cite{SBK00},
 which leads to five inequalities breaking into three types:
 \begin{itemize}
 \item $\u (\{ a,b,c\})\geq 0$,
 \item $\u (\{ a,b\}) +\u (\{ a,b,c\})\geq 0$,
 \item $\u (\{ a,b\}) +\u (\{ a,c\}) +\u (\{ b,c\}) +2\cdot\u (\{ a,b,c\})\geq 0$.
 \end{itemize}
 \end{example}

 Note that the described system of inequalities can be reduced; some
 of the specific inequalities appear to follow from the non-specific
 ones in combination with equality constraints and other specific
 inequalities. For example, if ${\calA}_{\min}$ consists of one
 singleton only, then the respective specific inequality (\ref{eq.specific-con})
 is vacuous because it trivially follows from the equality
 constrains (\ref{eq.standardize}). Actually, all specific inequalities with
 ${\calA}_{\min}$ containing a singleton are superfluous in case
 $|N|=3$. However, this is not true in case $|N|\geq 4$.

  The constraints (\ref{eq.standardize})-(\ref{eq.non-specific-con})
  were conjectured in \cite{SV11} to completely characterize the
  standard imset polytope and this conjecture was verified for $|N|\leq 4$.
  Nevertheless, one perhaps does not need a complete facet description
  (= polyhedral characterization) of the polytope. To apply some
  advanced methods of integer programming the confirmation of a
  weaker version of the conjecture might be enough. The weaker
  version of the conjecture from \cite{SV11} is that the
  polyhedron given by (\ref{eq.standardize})-(\ref{eq.non-specific-con})
  is an LP relaxation of the standard imset polytope.

  Before writing this report, we confirmed computationally the weaker version for
  $|N|=5$. The extreme rays of the cone of supermodular functions for $|N|=5$
  were obtained from \cite{SBK00} and independently computed using {\sl 4ti2}
  \cite{4ti2}, thus giving the non-specific inequality constraints
  (\ref{eq.non-specific-con}). Specific inequality constraints (\ref{eq.specific-con})
  were obtained from \cite{SV11}, where it was also calculated that there are $8,782$
  standard imsets for $|N|=5$.  Since the characteristic imsets (described in
  \S\,\ref{sec:charimset}) are 0-1-vectors and are in one-to-one correspondence to the
  standard imsets, we simply enumerated all vectors in ${\{ 0,1\}}^{{\cal P}_{2}(N)}$,
  applied the inverse transform (\ref{eq.inverse}) to get the corresponding $\u$-vectors,
  and tested whether they satisfied the above inequalities. By operating over ${\cal P}_2(N)$, and
  properly modifying the above inequalities, the equality constraints (\ref{eq.standardize})
  were satisfied. We verified that there were exactly $8,782$ integer solutions to the constraints
  (\ref{eq.standardize})-(\ref{eq.non-specific-con}) for $|N|=5$.

 \subsubsection{$\boldeta$ to standard imset}\label{sssec.eta-to-stan}
 Taking into account the definition of
 $\boldeta_{G}$, it is easy to see that $\u_{G}$ is obtained from
 $\boldeta_{G}$ by applying the following mapping
 $\boldeta\mapsto \u^{\sboldeta}$. For any $T\subseteq N$, we put
 \begin{equation}
 \u^{\sboldeta}(T) = \delta_{N}(T) - \delta_{\emptyset}(T) +
 \sum_{i\in N} \sum_{B\subseteq N\setminus \{ i\}} \eta(i|B)
 \cdot \{\delta_{B}(T)-\delta_{\{i\}\cup B}(T) \}\,.
 \label{eq.jaa-to-stand}
 \end{equation}
 This is clearly an affine mapping, ascribing lattice points to
 lattice points.
 Assuming $\boldeta$ belongs to the linear subspace
 specified by equality constraints (\ref{eq.jaa-equal}), we
 re-write (\ref{eq.jaa-to-stand}) as follows:
 \small
 \begin{eqnarray*}
 \lefteqn{\us^{\sboldeta}(T) =
 \delta_{N}(T) - \delta_{\emptyset}(T)} \\
 &+& \sum_{i\in N}\, \eta(i|\emptyset)
 \cdot\{\delta_{\emptyset}(T)-\delta_{\{i\}}(T)  \} +
 \sum_{i\in N} \sum_{\emptyset\neq B\subseteq N\setminus \{ i\}} \eta(i|B)
 \cdot \{\delta_{B}(T)-\delta_{\{i\}\cup B}(T) \}\\
 &\stackrel{(\ref{eq.jaa-equal})}{=}& \delta_{N}(T) - \delta_{\emptyset}(T) + \sum_{i\in N}\,
 \{ 1- \sum_{\emptyset\neq B\subseteq N\setminus \{ i\}} \eta(i|B)\}
 \cdot\{\delta_{\emptyset}(T)-\delta_{\{i\}}(T)\} + \ldots \\
 &=& \underbrace{\delta_{N}(T) +(|N|-1)\cdot \delta_{\emptyset}(T)
 -\sum_{i\in N} \delta_{\{i\}}(T) }_{\usc^{\emptyset}(T)\in {\dv Z}} \\
 &-& \sum_{i\in N} \sum_{\emptyset\neq B\subseteq N\setminus \{ i\}}
 \eta(i|B) \cdot \underbrace{\{\delta_{\emptyset}(T) -\delta_{\{i\}}(T)
 -\delta_{B}(T) +\delta_{\{i\}\cup B}(T)
 \}}_{\usc_{\langle i,B|\emptyset\rangle}(T)\in \{-1,0,+1\}}\,\,,
 \end{eqnarray*}
 \normalsize
 where $\u^{\emptyset}$ denotes the standard imset corresponding
 to the empty graph over $N$ and $\u_{\langle i,B|\emptyset\rangle}$
 the semi-elementary imset encoding $i\ci B\,|\,\emptyset$.

 Briefly, if $\boldeta$ satisfies (\ref{eq.jaa-equal}) then
 $$
 \u^{\sboldeta}= \u^{\emptyset} - \sum_{i\in N}\,
 \sum_{\emptyset\neq B\subseteq N\setminus \{ i\}} \eta(i|B)\cdot
 \u_{\langle i,B|\emptyset\rangle}.
 $$
 In particular, $\u=\u^{\sboldeta}$ belongs to the linear subspace
 specified by equality constraints (\ref{eq.standardize}).
 This is because these equalities hold for both $\u^{\emptyset}$
 and any $\u_{\langle i,B|\emptyset\rangle}$. Note that the converse
 is true as well (we leave an easy proof to the reader): if $\u$
 satisfies (\ref{eq.standardize}) then there exists $\boldeta$
 satisfying (\ref{eq.jaa-equal}) such that $\u=\u^{\sboldeta}$.
 In particular, (\ref{eq.standardize}) is the exact translation of
 (\ref{eq.jaa-equal}) into the framework of standard imsets.

 \subsection{Characteristic imsets}
 \label{sec:charimset}

 The characteristic imset (for an acyclic directed graph $G$), introduced in
 \cite{SHL10},
 is obtained from the standard imset by an affine transformation. More specifically,
 first, the {\em portrait} $\p_{G}$ of the standard imset $\u_{G}$ is
 obtained by a linear transform; second, the portrait is subtracted
 from the constant $1$-vector and the {\em characteristic imset} $\c_{G}$ is obtained:
 \begin{eqnarray}
 \p (S) &=& \sum_{T,\, S\subseteq T\subseteq N} \u (T) \qquad
 \mbox{for $S\subseteq N$},\label{eq.portrait}\\
 \c (S) &=& 1-\p (S) \qquad \mbox{for $S\subseteq N$}. \label{eq.characteristic}
 \end{eqnarray}
 Clearly, the equality constraints (\ref{eq.standardize}) are translated
 into the following tacit restrictions on $\c$-vectors:
 \begin{equation}
 \c (S)=1 \qquad \mbox{for $S\subseteq N$, $|S|\leq 1$}\,.
 \label{eq.equal-char}
 \end{equation}
 Therefore, for an acyclic directed graph $G$ over $N$, the components of
 the characteristic imset $\c_{G}$ for $|S|\leq 1$ are ignored and $\c_{G}$
 is formally considered to be an element of ${\dv Z}^{{\cal P}_{2}(N)}$.

 The mapping $\u\mapsto\c$ determined by (\ref{eq.portrait})-(\ref{eq.characteristic})
 is invertible: one can compute back the standard imset by the formula
 \begin{equation}
 \u (T)=\sum_{S,\, T\subseteq S\subseteq N} (-1)^{|S\setminus T|}\cdot
 \underbrace{[\,1-\c (S)\,]}_{\p (S)} ~~\qquad
 \mbox{for $T\subseteq N$}.
 \label{eq.inverse}
 \end{equation}
 Indeed, to see it fix $S\subseteq N$, substitute (\ref{eq.inverse})
 (with $S$ replaced by $D$) into the expression for the portrait $\p (S)$ and
 change the order of summation:
 \begin{eqnarray*}
 \sum_{T,\, S\subseteq T\subseteq N} \u(T) &=&
 \sum_{T,\, S\subseteq T\subseteq N} \ \sum_{D,\, T\subseteq D\subseteq N}
 (-1)^{|D\setminus T|}\cdot  \p(D)\\
 &=& \sum_{D,\, S\subseteq D\subseteq N}  \p(D)\cdot
 \underbrace{\sum_{T,\, S\subseteq T\subseteq D}
 (-1)^{|D\setminus T|}}_{\delta_{S}(D)} =\p(S)\,.
 \end{eqnarray*}
 Since the transformation is one-to-one, two acyclic
 directed graph $G$ and $H$ are independence equivalent if and only if $\c_{G}=\c_{H}$.
 Thus, the characteristic imset is also a unique Bayesian network structure
 representative.

 \subsubsection{Advantage of characteristic imsets}\label{sssec.advantage}
 Since standard and characteristic imsets are in one-to-one correspondence,
 one can transform the inequality constraints from \S\,\ref{sssec.our-approx}
 into the framework of characteristic imsets -- see \S\,\ref{sssec.trans-elem-char} and
 \S\,\ref{ssec.trans-cluster} for further details. One important consequence
 of these transformed constraints are {\em basic inequalities for
 characteristic imsets\/} valid in the acyclic case:

 \begin{corollary}\label{cor.basic-char}
 The constraints (\ref{eq.standardize})-(\ref{eq.non-specific-con}) on $\u$
 imply the inequalities $0\leq \c (S)\leq 1$, $S\subseteq N$ for the
 imset $\c$ ascribed to $\u$ by (\ref{eq.portrait})-(\ref{eq.characteristic}).
 \end{corollary}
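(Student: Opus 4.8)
The plan is to rewrite the target inequalities in terms of the portrait and then read them off from the three families of constraints. By~(\ref{eq.characteristic}) we have $\c(S)=1-\p(S)$, so $0\leq\c(S)\leq 1$ is equivalent to $0\leq\p(S)\leq 1$, where $\p(S)=\sum_{T,\,S\subseteq T\subseteq N}\u(T)$ by~(\ref{eq.portrait}). I would establish these two bounds by distinguishing the cardinality of $S$.

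For $|S|\leq 1$ I would appeal only to the equality constraints~(\ref{eq.standardize}). If $S=\emptyset$, the first equality gives $\p(\emptyset)=\sum_{T\subseteq N}\u(T)=0$; if $S=\{j\}$, the second gives $\p(\{j\})=\sum_{T,\,j\in T}\u(T)=0$. In both cases $\p(S)=0$, hence $\c(S)=1$, so the bounds hold trivially (this is exactly the tacit restriction~(\ref{eq.equal-char})).

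For $|S|\geq 2$ the two bounds come from the two kinds of inequality constraints. For $\c(S)\geq 0$, equivalently $\p(S)\leq 1$, I would use the specific inequality~(\ref{eq.specific-con}) for the class $\calA=\{T\subseteq N;\ S\subseteq T\}$. This $\calA$ is non-empty, consists of non-empty sets (since $S\neq\emptyset$), is closed under supersets, and has $\calA_{\min}=\{S\}$; thus $\sum_{T\in\calA}\u(T)=\p(S)\leq 1$. For $\c(S)\leq 1$, equivalently $\p(S)\geq 0$, I would test $\u$ against the set function $m_{S}(T)=\delta(S\subseteq T)$, for which $\langle m_{S},\u\rangle=\sum_{T,\,S\subseteq T}\u(T)=\p(S)$.

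The only step requiring a genuine argument is to certify $m_{S}$ as an admissible test function for~(\ref{eq.non-specific-con}). I would check by a short case split -- according to whether $S$ is contained in $E$, in $F$, in both, or in neither -- that $m_{S}(E\cup F)+m_{S}(E\cap F)\geq m_{S}(E)+m_{S}(F)$, so that $m_{S}$ is supermodular; it is standardized because $S\subseteq T$ forces $|T|\geq|S|\geq 2$, hence $m_{S}(T)=0$ whenever $|T|\leq 1$. Since the standardized supermodular functions form a pointed rational polyhedral cone whose extreme rays provide exactly the constraints~(\ref{eq.non-specific-con}), $m_{S}$ lies in this cone and is therefore a non-negative combination of the extreme-ray representatives; taking the same non-negative combination of the valid inequalities $\langle m,\u\rangle\geq 0$ yields $\p(S)=\langle m_{S},\u\rangle\geq 0$. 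Everything else is direct substitution, so combining the cases gives $0\leq\p(S)\leq 1$, i.e.\ $0\leq\c(S)\leq 1$, for all $S\subseteq N$.
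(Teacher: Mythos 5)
Your proof is correct and follows essentially the same route as the paper's: reduce to $0\leq\p(S)\leq 1$, dispose of $|S|\leq 1$ via the equality constraints, obtain $\p(S)\leq 1$ from the specific inequality for $\calA=\{T;\ S\subseteq T\subseteq N\}$, and obtain $\p(S)\geq 0$ from the supermodularity of the indicator $T\mapsto\delta(S\subseteq T)$. The only (harmless) divergence is that the paper asserts this indicator is itself one of the extreme-ray representatives appearing in (\ref{eq.non-specific-con}), whereas you merely place it in the cone of standardized supermodular functions and conclude by taking a conic combination of the extreme-ray inequalities --- a weaker claim that suffices and that, unlike the paper's assertion of extremality, you actually verify.
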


 \begin{proof}
 Because of  (\ref{eq.characteristic}), we show $0\leq \p (S)\leq 1$ for
 $S\subseteq N$. First, (\ref{eq.standardize}) says $\p (S)=0$ for $|S|\leq 1$.
 Given $S\subseteq N$, $|S|\geq 2$ the class of sets
 $\calA =\{ T;\ S\subseteq T\subseteq N\}$ is closed under supersets and,
 by (\ref{eq.specific-con}), $\p (S)\leq 1$. On the other hand,
 in (\ref{eq.non-specific-con}), among the (representatives of extreme)
 supermodular functions we find the function
 $$
 m^{S\uparrow}(T)=\left\{\,
 \begin{array}{lll}
 1 &\quad & \mbox{if $S\subseteq T$,} \\
 0 && \mbox{otherwise}.
 \end{array}
 \right.
 $$
 In particular, among the non-specific inequality constraints is the
 inequality
 $\p (S)=\sum_{T,\, S\subseteq T} \u (T)\equiv\langle m^{S\uparrow},\u\rangle\geq 0$.
 \end{proof}

 In particular, every characteristic imset $\c_{G}$ (for an acyclic directed graph $G$)
 is a 0-1-vector, which is a fact emphasized already in \cite{SHL10}, which is important from the
 point of view of (possible future application of) methods of integer programming.

 Another advantage of characteristic imsets is that they are closer
 to the graphical description (of Bayesian network structures) than standard imsets.
 Specifically, for $S\subseteq N$, $|S|\geq 2$ one has
 \begin{equation}
 \c_{G}(S)=1 ~~ \Leftrightarrow \mbox{ there exists $i\in S$ with $S\setminus \{
 i\}\subseteq\pa_{G}(i)$},
 \label{eq.char.imset.acyc}
 \end{equation}
 and there exists a polynomial algorithm for transforming the characteristic
 imset $\c_{G}$ into the respective {\em essential graph}, which is
 a traditional unique graphical representative of the Bayesian network structure
 given by $G$ -- see \cite{SHL10}.

 \subsubsection{$\boldeta$ to characteristic imset}\label{sssec.eta-to-char}

 \begin{lemma}\label{lem.3}
 The characteristic imset $\c_{G}$ is a linear function of $\boldeta_{G}$
 given by
 \begin{equation}
 \c (S) = \sum_{i \in S}\ \sum_{B,\, S\setminus\{ i\}\subseteq B\subseteq N\setminus\{ i\}}
 \eta(i|B)
 \qquad \mbox{where $|S| \geq 1$. }
 \label{eq.eta-to-char}
 \end{equation}
 \end{lemma}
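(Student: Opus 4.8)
The plan is to compute $\c(S)$ directly from its definition, substituting the known affine expression for the standard imset in terms of $\boldeta$. Starting from (\ref{eq.characteristic}) and (\ref{eq.portrait}) we have $\c(S)=1-\p(S)=1-\sum_{T,\,S\subseteq T\subseteq N}\u(T)$, and for $\u=\u^{\sboldeta}$ we may insert the formula (\ref{eq.jaa-to-stand}). The whole derivation rests on one elementary identity: for any fixed $A\subseteq N$,
$$
\sum_{T,\,S\subseteq T\subseteq N}\delta_{A}(T)=\delta(S\subseteq A)\,,
$$
since $\delta_{A}$ is supported only at $T=A$, which is counted by the sum exactly when $S\subseteq A$.

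First I would apply this identity term by term to (\ref{eq.jaa-to-stand}). The two constant basic vectors $\delta_{N}$ and $\delta_{\emptyset}$ contribute $\delta(S\subseteq N)-\delta(S\subseteq\emptyset)$, which equals $1-0=1$ for every $S$ with $|S|\geq 1$; this cancels precisely the leading $1$ in $\c(S)=1-\p(S)$. The remaining double sum over $i\in N$ and $B\subseteq N\setminus\{i\}$ then yields, after flipping the sign,
$$
\c(S)=\sum_{i\in N}\ \sum_{B\subseteq N\setminus\{i\}}\eta(i|B)\cdot\{\delta(S\subseteq\{i\}\cup B)-\delta(S\subseteq B)\}\,.
$$

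The decisive step is to evaluate the coefficient $\delta(S\subseteq\{i\}\cup B)-\delta(S\subseteq B)$ by a short case split on whether $i\in S$. If $i\notin S$, then because $i\notin S$ the conditions $S\subseteq\{i\}\cup B$ and $S\subseteq B$ are equivalent, so the coefficient vanishes and the index $i$ drops out of the outer sum. If $i\in S$, then $S\subseteq B$ is impossible, since $i\in S$ but $i\notin B$ (as $B\subseteq N\setminus\{i\}$), so the second indicator is $0$; and $S\subseteq\{i\}\cup B$ reduces to $S\setminus\{i\}\subseteq B$. Hence the outer sum collapses to $i\in S$ and the inner sum restricts to those $B$ with $S\setminus\{i\}\subseteq B\subseteq N\setminus\{i\}$, giving exactly (\ref{eq.eta-to-char}). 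Specializing $\boldeta=\boldeta_{G}$ (so that $\u^{\sboldeta}=\u_{G}$ and $\c=\c_{G}$) proves the lemma.

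I expect no genuine obstacle here: the argument is a mechanical substitution followed by the indicator case split, and the only point demanding care is tracking signs through $\c=1-\p$ and checking that the constant terms cancel for all $S$ with $|S|\geq 1$. As a sanity check, one may note that the resulting count $\sum_{i\in S}\delta(S\setminus\{i\}\subseteq\pa_{G}(i))$ is consistent with the $0$-$1$ characterization (\ref{eq.char.imset.acyc}), since acyclicity forbids two distinct $i,j\in S$ from simultaneously satisfying $S\setminus\{i\}\subseteq\pa_{G}(i)$ and $S\setminus\{j\}\subseteq\pa_{G}(j)$.
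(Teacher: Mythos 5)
Your proof is correct and follows essentially the same route as the paper: substitute (\ref{eq.jaa-to-stand}) into the portrait formula, use the identity $\sum_{T,\,S\subseteq T\subseteq N}\delta_{A}(T)=\delta(S\subseteq A)$, and resolve the indicator difference by the case split on whether $i\in S$. The only cosmetic difference is that the paper computes $\p(S)$ first and then applies $\c=1-\p$, whereas you carry the $1-\p(S)$ through from the start; the substance is identical.
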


 \begin{proof}
 Given  $S\subseteq N$, substitute (\ref{eq.jaa-to-stand}) into
 (\ref{eq.portrait}) and change the order of summation:
 \small
 \begin{eqnarray*}
 \p (S) &=&  \sum_{T,\, S\subseteq T\subseteq N}
 \big[\,\delta_{N}(T) - \delta_{\emptyset}(T) +
 \sum_{i\in N} \sum_{B\subseteq N\setminus \{ i\}} \eta(i|B)
 \cdot \{\delta_{B}(T)-\delta_{\{i\}\cup B}(T) \}\,\big]\\
 &=&  \sum_{T,\, S\subseteq T\subseteq N} \delta_{N}(T) -
 \sum_{T,\, S\subseteq T\subseteq N}\delta_{\emptyset}(T)\\
 &&
 +\sum_{i\in N} \sum_{B\subseteq N\setminus \{ i\}} \eta(i|B)\cdot
 \big\{\, \sum_{T,\, S\subseteq T\subseteq N}\delta_{B}(T)-
 \sum_{T,\, S\subseteq T\subseteq N}\delta_{\{i\}\cup B}(T) \,\big\}\\
 &=& 1-\delta_{\emptyset}(S) +
 \sum_{i\in N} \sum_{B\subseteq N\setminus \{ i\}} \eta(i|B)\cdot
 \big\{\, \delta (S\subseteq B)- \delta (S\subseteq \{ i\}\cup B)
 \,\big\}\,.
 \end{eqnarray*}
 \normalsize
 Realize that the expression $\delta (S\subseteq B)- \delta (S\subseteq \{ i\}\cup B)$
 vanishes if either $S\subseteq B$ or $S\setminus (\{ i\}\cup B)\neq\emptyset$, otherwise it is
$-1$. Thus, assuming $|S|\geq 1$, one has
 \begin{eqnarray*}
 \p (S) &=&  1+ \sum_{i\in N} \sum_{B\subseteq N\setminus \{ i\}} \eta(i|B)\cdot
 (-1)\cdot \delta (i\in S,\ S\subseteq \{ i\}\cup B)\\
 &=& 1- \sum_{i\in S} \sum_{B\subseteq N\setminus \{ i\}}
\eta(i|B)\cdot
 \delta (S\setminus\{ i\}\subseteq B)\,,
 \end{eqnarray*}
 because, in case $i\in S$, then $S\subseteq \{ i\}\cup B$ is equivalent to $S\setminus\{ i\}\subseteq
 B$ . Taking (\ref{eq.characteristic}) into
 consideration we get (\ref{eq.eta-to-char}).
 \end{proof}

 Let us call the mapping given by (\ref{eq.eta-to-char})
 the {\em characteristic transformation}. It can formally be
 applied to any $\boldeta$-vector, in particular, to the code $\boldeta_{G}$ of a general
 directed graph $G$. Thus, we get a formula for the
 ``quasi-characteristic" imset (= an element of ${\dv Z}^{{\cal P}_{2}(N)}$)
 ascribed to a graph over $N$:
 \begin{equation}
 \c_{G}(S)=\mbox{number of super-terminal nodes in $S$}
 \quad \mbox{for $S\subseteq N$, $|S|\geq 2$}.
 \label{eq.quasi-char}
 \end{equation}
 Here, a {\em super-terminal node} (in $S$) means $i\in S$ such that for all $j\in
 S\setminus \{i\}$ one has $j\rightarrow i$ in $G$.
 Indeed, having fixed $S$, $|S|\geq 2$ and $i\in S$, the expression
 $\sum_{B,\, S\setminus\{ i\}\subseteq B\subseteq N\setminus\{ i\}} \eta_{G}(i|B)$
 is either $0$ or $1$ depending upon $S\setminus\{ i\}\subseteq \pa_{G}(i)$.
 Observe that (\ref{eq.char.imset.acyc}) is a special case (\ref{eq.quasi-char})
 since, in case of an acyclic directed graph, any set $S$ has at most one super-terminal node.

 \begin{example}\label{exa.4}\rm
 Consider the graph $G$ from Example \ref{exa.1}. Then $\c_{G}(\{ a,c\})=0$,
 $\c_{G}(\{ b,c\})=\c_{G}(\{ a,b,c\})=1$ and $\c_{G}(\{ a,b\})=2$.
 Observe that $\c_{G}$ does not satisfy the basic constrains $0\leq \c \leq 1$
 valid in acyclic case. This is because $G$ is not acyclic.
 \end{example}

 \section{Transformation of inequality constraints}
 In \S\,\ref{sssec.eta-to-stan} and \S\,\ref{sssec.eta-to-char}, we
 have described mappings which transform the $\boldeta$-vectors used by
 Jaakkola et al.\/ \cite{Jaa10} to standard/characteristic imsets.
 The advantage of the $\boldeta$-polytope is the existence of a good
 (= explicit) outer polyhedral approximation (see Lemma \ref{lem.2}
 in \S\,\ref{sssec.jaa-approx}). In this section, we
 characterize the image of that polyhedral approximation (by the above
 maps) and compare the transformed approximation (of $\boldeta$-polytope)
 with the approximation of the standard imset polytope from \S\,\ref{sssec.our-approx}.
 The main technical difficulty we have to tackle is that the
 mappings transforming $\boldeta$-vectors to imsets are many-to-one.
 Another feature is that the transformation raises the number of
 linear constraints. To clarify the reasons for that, in
 \S\,\ref{ssec.trans-elem} we first deal with the transformation of elementary
 constraints (\ref{eq.jaa-non-neg})-(\ref{eq.jaa-equal}) and,
 later, in \S\,\ref{ssec.trans-cluster}, with the transformation of cluster
 inequalities (\ref{eq.cluster-1}).

 \subsection{Transformation of elementary $\boldeta$-constraints}\label{ssec.trans-elem}
 Now, the question of our interest is to transform the
 constraints (\ref{eq.jaa-non-neg})-(\ref{eq.jaa-equal}) only, that is,
 to characterize the form of the inequalities of the image of the polyhedron
 $\J^{\prime}$ from Lemma \ref{lem.1}. Let us start with an example,
 illustrating our method.

 \begin{example}\label{exa.5}\rm
 Consider $N=\{a,b,c\}$, the polyhedron $\J^{\prime}$ and the characteristic
 transformation $\boldeta\mapsto\c$ given by (\ref{eq.eta-to-char}).
 The idea is to transform each vertex of $\J^{\prime}$ and take the
 convex hull $\R$ of the images of vertices. Because of linearity of the
 map $\boldeta\mapsto\c$, the polytope $\R$ is the image of $\J^{\prime}$.
 Thus, it is enough to find the facet description of $\R$; this is
 the exact translation of (\ref{eq.jaa-non-neg})-(\ref{eq.jaa-equal}) then.

 The vertices of  $\J^{\prime}$ are exactly the codes of general directed graphs
 (see Lemma \ref{lem.1}) and their images are given by (\ref{eq.quasi-char}).
 Thus, the (permutation type representatives of) images of vertices of $\J^{\prime}$
 were obtained in this way. Here they are (the order of component is $ab,ac,bc,abc$):
 \begin{eqnarray*}
 &[0, 0, 0, 0],
 [1, 0, 0, 0],
 [2, 0, 0, 0],
 [2, 1, 0, 0],
 [1, 1, 0, 0],
 [1, 1, 1, 0],&\\
 &[1, 1, 0, 1],
 [2, 1, 0, 1],
 [2, 2, 0, 1],
 [1, 1, 1, 1],
 [2, 1, 1, 1],&\\
 &[2, 1, 1, 2],
 [2, 2, 1, 2],
 [2, 2, 2, 3].&
 \end{eqnarray*}
 Remaining images can be obtained by permutation of first 3
 components. We computed the facet-description of their convex hull $\R$ by
 {\sl Polymake} \cite{Polymake}. The result had fifteen inequalities.
 Here, we only recorded the (permutation) types of obtained inequalities:
 \begin{itemize}
 \item $0\leq \c(ab)$,
 \item $0\leq 2-\c(ab)$,
 \item $0\leq 3-\c(ab)-\c(ac)-\c(bc)+\c(abc)$,
 \item $0\leq \c(abc)$,
 \item $0\leq 1+\c(ab)-\c(abc)$,
 \item $0\leq \c(ab)+\c(ac)-\c(abc)$,
 \item $0\leq \c(ab)+\c(ac)+\c(bc)-2\c(abc)$.
 \end{itemize}
 To make sure we computed the vertices of the polyhedron given by
 these inequalities. The (permutation) type representatives are as
 follows:
 $$
 [0,0,0,0],
 [2,0,0,0],
 [2,1,0,0],
 [1,1,0,1],
 [2,1,0,1],
 [2,2,0,1],
 [2,1,1,2],
 [2,2,2,3].
 $$
 We observe that some of images of vertices of $\J^{\prime}$ are convex combinations of the
 others: for example, $[1,0,0,0]$ comes from $[0,0,0,0]$ and $[2,0,0,0]$.
 Note that the original polyhedron $\J^{\prime}$ was given by twelve inequalities
 (and three equality constraints). Since $\R$ is given by fifteen inequality (and four
 implicit equality) constraints, the transformation to the framework of
 characteristic imsets raised the number of inequality constraints.

 Another interesting observation is that the obtained fifteen inequalities in fact coincide
 with the translation of specific inequality constraints (\ref{eq.specific-con})
 to the framework of characteristic imsets in case $N=\{ a,b,c\}$ -- see
 Example \ref{exa.6} for details.
 \end{example}

 This leads to a natural conjecture that Jaakkola et al.'s elementary constraints
 (\ref{eq.jaa-non-neg})-(\ref{eq.jaa-equal}) are equivalent to our specific constraints
 for any $|N|$. We confirm this conjecture below, directly by considering the
 transformation of $\boldeta\mapsto\u$. Later, we transform the specific constraints
 to the framework of characteristic imsets (see \S\,\ref{sssec.trans-elem-char}).

 \subsubsection{Translation to the framework of
 standard imsets}\label{sssec.trans-elem-stand}

 Thus, the task is to characterize in terms of $\u$ the image (by
 $\boldeta\mapsto \u^{\sboldeta}$) of the polytope
 $\J^{\prime}$ given by non-negativity and equality constraints.
 More specifically, we wish to have a finite system of linear
 inequalities on $\u$ which together with (\ref{eq.standardize})
 -- see \S\,\ref{sssec.eta-to-stan} -- characterize those
 $\u\in {\dv R}^{{\cal P}(N)}$ for which
 \begin{equation}
 \exists\, \boldeta
 ~~ \mbox{satisfying (\ref{eq.jaa-non-neg}),(\ref{eq.jaa-equal})
 and}~ \u^{\sboldeta}(T)=\u(T) ~~\mbox{for any}~ T\subseteq N,|T|\geq 2\,.
 \label{eq.problem-1}
 \end{equation}
 This task can equivalently be formulated as follows. Let us put
 $m\equiv 2^{|N|}-1$, $n\equiv |N|\cdot 2^{|N|-1}$ and consider
 a special $m\times n$ matrix $\boldA$, whose
 \begin{itemize}
 \item rows correspond to sets $T\subseteq N$, $|T|\geq 1$,
 \item columns correspond to pairs $(i|B)$ where $i\in N$,
 $B\subseteq N\setminus\{ i\}$.
 \end{itemize}
 More specifically, the entry $\bolda\, [\,T,(i|B)\,]$ of $\boldA$ is
 given by
 \begin{equation}
 \begin{array}{lcll}
 \bolda\, [\,T,(i|B)\,] &=& \delta_{\{i\}\cup B}(T)-\delta_{B}(T)
 \qquad &\mbox{if}~~ |T|\geq 2\,,\\[0.3ex]
 \bolda\, [\,T,(i|B)\,] &=& \delta_{\{i\}}(T)
 &\mbox{if}~~ |T|=1\,.
 \end{array}
 \label{eq.def-A-matrix}
 \end{equation}
 Moreover, to any $\u\in {\dv R}^{{\cal P}_{2}(N)}$, we ascribe
 a column $m$-vector $\boldb_{\usc}$ whose components $\bepa_{\usc}\,[\,T\,]$
 are specified as follows:
 \begin{eqnarray*}
 \bepa_{\usc}\, [\,T\,] &=& \delta_{N}(T)-\u(T)
 \quad ~ \mbox{if}~~ |T|\geq 2\,,\\
 \bepa_{\usc}\, [\,T\,] &=& 1
 \qquad\qquad\qquad\quad \mbox{if}~~ |T|=1\,.
 \end{eqnarray*}
 Then (\ref{eq.problem-1}) is equivalent to the condition
 \begin{equation}
 \exists\, \boldeta\in {\dv R}^{n}
 ~~ \mbox{satisfying $\boldeta\geq 0$ and $\boldA\boldeta=\boldb_{\usc}$.}
 \label{eq.problem-2}
 \end{equation}
 {\small\sl
 Indeed, (\ref{eq.jaa-non-neg}) means $\boldeta\geq 0$, while
 (\ref{eq.jaa-equal}) for $j\in N$ is the requirement that
 the component of $\boldb_{\usc}$ for $T=\{ j\}$, which is $1$,
 coincides with the respective component of $\boldA\boldeta$:
 $$
 1=\sum_{(i|B)}
 \bolda\, [\,T,(i|B)\,]\cdot \eta(i|B) =
 \sum_{i\in N}\sum_{B\subseteq N\setminus\{ i\}}
 \delta_{\{i\}}(\{ j\})\cdot \eta(i|B) =
 \sum_{B\subseteq N\setminus\{ j\}} \eta(j|B)\,.
 $$
 Analogously, for fixed $T\subseteq N$, $|T|\geq 2$, $\u(T)=\u^{\sboldeta}(T)$ has,
 by (\ref{eq.jaa-to-stand}), the form
 $$
 \us(T)=\delta_{N}(T)-\sum_{i\in N}\sum_{B\subseteq N\setminus\{ i\}}
 \underbrace{\{\delta_{\{i\}\cup B}(T)-
 \delta_{B}(T)\}}_{\boldas\, [\,T,(i|B)\,]}\cdot\, \eta(i|B)
 $$
 and can be expressed equivalently as
 $$
 \sum_{i\in N}\sum_{B\subseteq N\setminus\{ i\}}
 \bolda\, [\,T,(i|B)\,]\cdot \eta(i|B) =\delta_{N}(T)-\us (T)
 \equiv \bepa_{\usc}(T)\,,
 $$
 which means the components of $\boldA\boldeta$ and $\boldb_{\usc}$ for
 $T$ coincide.
 }

 Now, Farkas' lemma (see Corollary 7.1d in \cite{Sch86}) applied to
 $\boldA$ and $\boldb_{\usc}$ says that (\ref{eq.problem-2}) is
 equivalent to the requirement:
 \begin{equation}
 \forall\, y\in {\dv R}^{m} \quad
 \boldA^{\top}y\geq 0 ~~\Rightarrow ~~
 \boldb_{\usc}^{\top}y\geq 0\,.
 \label{eq.problem-3}
 \end{equation}
 To simplify this requirement we re-write the condition
 $\boldA^{\top}y\geq 0$ in this form:
 \begin{eqnarray}
 &\forall\, i\in N & ~~y(\{i\})\geq 0, \label{eq.a1}\\
 & \forall\, S\subseteq N,|S|=2, ~\forall\,i\in S &
 ~~y(S)+y(\{i\})\geq 0, \label{eq.a2}\\
 &\forall\, S\subseteq N,|S|\geq 3, ~\forall\,i\in S &
 ~~y(S)+y(\{i\})-y(S\setminus\{ i\})\geq 0. \label{eq.a3}
 \end{eqnarray}
 {\small\sl
 Indeed, the rows of $\boldA^{\top}$ correspond to pairs
 $(i|B)$, $i\in N$, $B\subseteq N\setminus\{ i\}$. If $i\in N$ and
 $B=\emptyset$ then the component of $\boldA^{\top}y$
 for $(i|\emptyset )$ is as follows:
 $$
 \sum_{\emptyset\neq T\subseteq N}
 \bolda\, [\,T,(i|\emptyset )\,]\cdot y(T)=
 \sum_{|T|=1} \delta_{\{i\}}(T)\cdot y(T)= y(\{ i\})\,,
 $$
 because $\bolda\, [\,T,(i|\emptyset )\,]=0$ for $|T|\geq 2$.
 This gives (\ref{eq.a1}). If $i\in N$, $B\subseteq N\setminus\{ i\}$ with
 $|B|=1$, then $\bolda\, [\,T,(i|B)\,]=\delta_{\{i\}\cup B}(T)$ for $|T|\geq 2$
 and one can write
 \begin{eqnarray*}
 \lefteqn{\sum_{\emptyset\neq T\subseteq N}
 \bolda\, [\,T,(i|B)\,]\cdot y(T)}\\
 &=&
 \sum_{|T|=1} \delta_{\{i\}}(T)\cdot y(T)
 + \sum_{|T|\geq 2} \delta_{\{i\}\cup B}(T)\cdot y(T)
 = y(\{ i\})+ y(\{ i\}\cup B)\,,
 \end{eqnarray*}
 which leads to (\ref{eq.a2}) for $S=\{ i\}\cup B$.
 Finally, if  $i\in N$, $B\subseteq N\setminus\{ i\}$ with $|B|\geq 2$
 then
 \begin{eqnarray*}
 \lefteqn{\sum_{\emptyset\neq T\subseteq N}
 \bolda\, [\,T,(i|B)\,]\cdot y(T)}\\
 &=&
 \sum_{|T|=1} \delta_{\{i\}}(T)\cdot y(T)
 + \sum_{|T|\geq 2} \{\delta_{\{i\}\cup B}(T)-\delta_{B}(T)\}\cdot
 y(T)\\
 &=& y(\{ i\})+ y(\{ i\}\cup B)-y(B)\,,
 \end{eqnarray*}
 which leads to (\ref{eq.a3}) for $S=\{ i\}\cup B$.
 }

 The next step is to show that $\{y\in {\dv R}^{m};
 \boldA^{\top}y\geq 0\}$ is a pointed (rational polyhedral) cone
 and characterize its extreme rays. In fact, we show that the rays correspond
 to non-empty classes of sets $\calA\subseteq {\cal P}_{1}(N)$
 closed under supersets. More specifically, we ascribe a vector
 $y_{\calA}\in {\dv R}^{m}$ to any such class $\calA$ by:
 \begin{equation}
 y_{\calA}(T) \equiv\delta (T\in\calA) -
 |\{j\in N;~ \{j\}\in\calA ~\&~ \{j\}\subset T\}|
 \qquad \mbox{for $T\in {\cal P}_{1}(N)$}\,.
 \label{eq.extreme-ray}
 \end{equation}
 Here is the crucial observation:

 \begin{lemma}\label{lem.4}
 A vector $y\in {\dv R}^{m}$ satisfies (\ref{eq.a1})-(\ref{eq.a3})
 if and only if it is a conic combination (= a linear combination with non-negative
 real coefficients) of vectors $y_{\calA}$ for classes
 $\emptyset\neq\calA\subseteq {\cal P}_{1}(N)$ closed under supersets.
 \end{lemma}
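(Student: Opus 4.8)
The plan is to pass to new coordinates in which the three constraint families decouple completely. For any $y\in{\dv R}^{m}$ I would introduce the ``cumulative'' function $\Lambda$ on ${\cal P}_{1}(N)$ defined by $\Lambda(\{i\})\equiv y(\{i\})$ and $\Lambda(T)\equiv y(T)+\sum_{i\in T} y(\{i\})$ for $|T|\geq 2$. This is an invertible linear substitution, with inverse $y(\{i\})=\Lambda(\{i\})$ and $y(T)=\Lambda(T)-\sum_{i\in T}\Lambda(\{i\})$ for $|T|\geq 2$, so working with $\Lambda$ loses nothing. The motivation is that summing the defining formula (\ref{eq.extreme-ray}) over a tentative conic combination $y=\sum_{\calA}\lambda_{\calA}\,y_{\calA}$ makes the correction terms telescope: one checks that the would-be cumulative weights $\sum_{\calA:\,T\in\calA}\lambda_{\calA}$ must equal exactly this $\Lambda(T)$. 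In other words, in the $\Lambda$-coordinates the generator $y_{\calA}$ is simply the indicator $T\mapsto\delta(T\in\calA)$ of the filter $\calA$, and $\Lambda$ records the total weight placed on filters containing $T$.

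Next I would translate the constraints by inspecting covers of the inclusion order on ${\cal P}_{1}(N)$, every one of which has the form $(S\setminus\{i\},S)$ with $i\in S$. A direct computation should give $\Lambda(S)-\Lambda(S\setminus\{i\})=y(S)+y(\{i\})$ when $|S|=2$ and $\Lambda(S)-\Lambda(S\setminus\{i\})=y(S)+y(\{i\})-y(S\setminus\{i\})$ when $|S|\geq 3$. Thus (\ref{eq.a2}) and (\ref{eq.a3}) are precisely the assertion that $\Lambda$ is isotone (nondecreasing) along covers, while (\ref{eq.a1}) reads $\Lambda(\{i\})\geq 0$, i.e.\ nonnegativity at the minimal elements. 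Since every set lies above some singleton, monotonicity together with nonnegativity at singletons forces $\Lambda\geq 0$ everywhere, so $y$ satisfies (\ref{eq.a1})--(\ref{eq.a3}) if and only if $\Lambda$ is a nonnegative isotone function on ${\cal P}_{1}(N)$.

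The lemma then reduces to the standard fact that a function on a finite poset is a nonnegative combination of indicators of nonempty up-sets exactly when it is nonnegative and isotone. The reverse inclusion is immediate, since each filter indicator is itself nonnegative and isotone, so every $y_{\calA}$ and hence every conic combination satisfies (\ref{eq.a1})--(\ref{eq.a3}). For the forward inclusion I would use the threshold (``layer-cake'') decomposition: listing the distinct values $0=v_{0}<v_{1}<\cdots<v_{k}$ of a nonnegative isotone $\Lambda$, one has $\Lambda(T)=\sum_{\ell=1}^{k}(v_{\ell}-v_{\ell-1})\,\delta(T\in U_{\ell})$ with super-level sets $U_{\ell}\equiv\{T;\ \Lambda(T)\geq v_{\ell}\}$, each a nonempty class closed under supersets and each coefficient $v_{\ell}-v_{\ell-1}>0$. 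Transporting this back through the coordinate change yields $y=\sum_{\ell}(v_{\ell}-v_{\ell-1})\,y_{U_{\ell}}$, the desired conic combination. The one genuinely creative step, and thus the main obstacle, is spotting the substitution $y\mapsto\Lambda$ that simultaneously linearises (\ref{eq.extreme-ray}) into filter indicators and turns the mixed inequalities (\ref{eq.a1})--(\ref{eq.a3}) into plain monotonicity; once this dictionary is established the remaining arguments are routine.
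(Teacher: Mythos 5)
Your proof is correct, and it takes a genuinely different (and considerably slicker) route than the paper's. The paper proves the nontrivial direction by a greedy peeling induction on the support class $\calA_{y}=\{S;\ \exists\, T\subseteq S,\ y(T)\neq 0\}$: it subtracts $\beta\cdot y_{\calA_{y}}$ with $\beta=\min\{y(T);\ T\in(\calA_{y})_{\min}\}$ and then verifies by hand that the residual still satisfies (\ref{eq.a1})--(\ref{eq.a3}), which costs a two-case check for (\ref{eq.a1}), a five-case check for (\ref{eq.a2}) and a seven-case check for (\ref{eq.a3}), all carried out in the original coordinates. Your substitution $\Lambda(\{i\})=y(\{i\})$, $\Lambda(T)=y(T)+\sum_{i\in T}y(\{i\})$ collapses all of this: one verifies in a line that $y_{\calA}$ becomes the indicator of $\calA$ (the correction term in (\ref{eq.extreme-ray}) cancels exactly against $\sum_{i\in T}\delta(\{i\}\in\calA)$) and that (\ref{eq.a1})--(\ref{eq.a3}) become nonnegativity at singletons plus monotonicity along covers, whence the layer-cake decomposition over super-level sets finishes the argument. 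In fact the two proofs are secretly the same decomposition: one checks that $\calA_{y}$ is precisely the top super-level set $\{T;\ \Lambda(T)>0\}$ and that the paper's $\beta$ equals your $v_{1}$, so the peeling steps reproduce the layer-cake coefficients; what your coordinate change buys is that every verification becomes a triviality about isotone functions on a poset, and it additionally exposes the cone $\{y;\ \boldA^{\top}y\geq 0\}$ as linearly isomorphic to the cone of nonnegative isotone functions on ${\cal P}_{1}(N)$, from which one reads off immediately that the $y_{\calA}$ span exactly the extreme rays (a fact the paper only asserts informally). The only point you pass over quickly --- that monotonicity along covers implies $\Lambda(T)\leq\Lambda(S)$ for arbitrary nonempty $T\subseteq S$, which is what makes each super-level set closed under supersets --- follows by interpolating a chain of one-element extensions and is indeed routine.
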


 \begin{proof}
 First, we leave to the reader to verify that any such vector
 $y_{\calA}$ satisfies (\ref{eq.a1})-(\ref{eq.a3}), which implies
 the sufficiency of the condition.

 To verify the converse implication, we ascribe to any
 $y\in {\dv R}^{m}$ satisfying (\ref{eq.a1})-(\ref{eq.a3}) the
 class of sets
 $$
 \calA_{y} =\{ S\in {\cal P}_{1}(N);\ \exists\, T\in {\cal P}_{1}(N),\
 T\subseteq S ~~ y(T)\neq 0\}\,,
 $$
 which is clearly closed under supersets and non-empty if $y\neq 0$.
 The idea is to prove the converse implication by induction on $|\calA_{y}|$.
 If $|\calA_{y}|=0$ then $y\equiv 0$ and the claim that $y$ is a conic
 combination of those vectors is evident. If $|\calA_{y}|\geq 1$ then it is enough
 to find some $\beta >0$ such that $y^{\prime}\equiv y-\beta\cdot y_{\calA}$
 satisfies (\ref{eq.a1})-(\ref{eq.a3}) and $|\calA_{y^{\prime}}|<|\calA_{y}|$.

 Since now we fix $y\in {\dv R}^{m}$, $y\neq 0$ satisfying
 (\ref{eq.a1})-(\ref{eq.a3}) and put:
 $$
 \calA\equiv\calA_{y}, \quad y_{*}\equiv y_{\calA},
 \quad Y\equiv\{ i\in N;\ y(\{ i\})\neq 0\}\,.
 $$
 Observe a few basic facts:
 $$
 y(S)=0 \quad \mbox{for $S\in {\cal P}_{1}(N)\setminus\calA$},
 \qquad y(S)>0 \quad \mbox{for $S\in {\calA}_{\min}$}\,.
 $$
 {\small\sl
 Indeed, assuming $S\in {\calA}_{\min}$ one has $y(S)\neq 0$.
 If $|S|=1$ then (\ref{eq.a1}) implies $y(S)>0$. If $|S|=2$
 then $\{ i\}\not\in\calA$ for both $i\in S$.
 Hence, $y(\{ i\})=0$ and (\ref{eq.a2}) gives $y(S)>0$.
 If $|S|\geq 3$ and $i\in S$, then both $\{ i\}\not\in\calA$
 and $S\setminus \{ i\}\not\in\calA$ and (\ref{eq.a3}) gives $y(S)>0$.
 }

 In particular, since $\{ j\}\in {\calA}_{\min}$ for $j\in Y$,
 and $\{i\}\not\in\calA$ for $i\not\in Y$,
 \begin{eqnarray}
 \beta\equiv\min\, \{y(T); \
 T\in{\calA}_{\min}\}>0,\,\mbox{and} &~~&
 y(\{j\})\geq\beta>0 ~~\mbox{for $j\in Y$}, \label{eq.a4}\\
 && y(\{i\})=0 ~~\mbox{for $i\in N\setminus Y$} \label{eq.4b}\,.
 \end{eqnarray}
 Further, we observe that $y$ is non-decreasing set function
 on subsets of $N\setminus Y$.
 {\small\sl
 Indeed, it is enough to show
 $T\subseteq S\subseteq N\setminus Y,\ |S\setminus T|=1 ~\Rightarrow ~ y(S)\geq y(T)$.
 Take $S\setminus T=\{ i\}$; then $i\not\in Y$ and $y(\{ i\})=0$.
 If $|S|=2$ then $T=\{ j\}$ with $j\not\in Y$ and
 $y(S)=y(S)+y(\{ i\})\geq 0= y(T)$
 follows from (\ref{eq.a2}) and (\ref{eq.4b}). If $|S|\geq 3$ then (\ref{eq.a3})
 says $y(S)+0-y(T)\geq 0$.
 }

 This implies:
 \begin{equation}
 S\in\calA\,,\ S\cap Y=\emptyset ~~\Rightarrow ~~ y(S)\geq\beta\,.
 \label{eq.a5}
 \end{equation}
 {\small\sl
 Indeed, it is enough to find $T\in\calA_{\min}$, $T\subseteq S$
 (of course, $T\cap Y=\emptyset$) and combine $y(S)\geq y(T)$ with
 $y(T)\geq \beta$, which follows from the definition of $\beta$
 in (\ref{eq.a4}).
 }

 Finally, also have:
 \begin{equation}
 S\in {\cal P}_{1}(N),\ |S\cap Y|\leq 1 ~~\Rightarrow ~~ y(S)\geq 0\,.
 \label{eq.a6}
 \end{equation}
 {\small\sl
 Indeed, this was verified in cases $|S|=1$ and $|S\cap Y|=0$ in
 (\ref{eq.a4})-(\ref{eq.a5}). Assume
 $|S|\geq 2$ and $|S\cap Y|=1$ and use the induction on $|S|$.
 If $|S|=2$ then $S=\{ i,j\}$ with
 $i\not\in Y$ and $j\in Y$ and (\ref{eq.a2})+(\ref{eq.4b}) give $y(S)\geq -y(\{
 i\})=0$. If $|S|\geq 3$ then choose $i\in S\setminus Y$ and write
 by (\ref{eq.a3})+(\ref{eq.4b}) $y(S)\geq y(S\setminus\{ i\})-y(\{i\})=y(S\setminus\{
 i\})$. Now, $y(S\setminus\{ i\})\geq 0$ follows from the
 induction premise.
 }

 To smooth later considerations let us gather the observations
 about $y_{*}=y_{\calA}$ defined in (\ref{eq.extreme-ray}).
 For singletons we have:
 $$
 y_{*}(\{i\})=1 ~~\mbox{ for $i\in Y$}, \qquad
 y_{*}(\{i\})=0 ~~\mbox{ for $i\not\in Y$}.
 $$
 Given $S\subseteq N$, $|S|=2$ we have:
 \begin{eqnarray*}
 y_{*}(S)=1 &~~& \mbox{if $S\cap Y=\emptyset$, $S\in\calA$.}\\
 y_{*}(S)=0 &~~& \mbox{if either [\,$S\cap Y=\emptyset\ \& \
 S\not\in\calA$\,] or $|S\cap Y|=1$.}\\
 y_{*}(S)=-1 &~~& \mbox{if $S\subseteq Y$.}
 \end{eqnarray*}
 For $S\subseteq N$, $|S|\geq 3$ we have:
 \begin{eqnarray*}
 y_{*}(S)=1 &~~& \mbox{if $S\cap Y=\emptyset$, $S\in\calA$.}\\
 y_{*}(S)=0 &~~& \mbox{if $S\cap Y=\emptyset$, $S\not\in\calA$.}\\
 y_{*}(S)=1-|S\cap Y| &~~& \mbox{if $S\cap Y\neq\emptyset$.}
 \end{eqnarray*}

 To show that
 $$
 y^{\prime}\equiv y-\beta\cdot y_{*}
 $$
 satisfies (\ref{eq.a1}), that is, $y^{\prime}(\{ i\})\geq 0$ for $i\in N$,
 we distinguish two cases.
 \begin{itemize}
 \item If $i\not\in Y$ then $y_{*}(\{ i\})=0$ and (\ref{eq.a1})
 for $y$ implies the same equality for $y^{\prime}$.
 \item If $i\in Y$ then $y^{\prime}(\{ i\})=y(\{ i\})-\beta\cdot y_{*}(\{ i\})
 = y(\{ i\})-\beta\cdot 1=y(\{ i\})-\beta\geq 0$ owing to
 (\ref{eq.a4}).
 \end{itemize}

 To show that $y^{\prime}$ satisfies (\ref{eq.a2}), that is,
 $y^{\prime}(S)+y^{\prime}(\{i\})\geq 0$ for $S\subseteq N$,
 $|S|=2$ and $i\in S$ we distinguish five cases.
 \begin{itemize}
 \item If $S\subseteq Y$ then $i\in Y$ and $y_{*}(S)+y_{*}(\{i\})=(-1)+1=0$
 and (\ref{eq.a2}) for $y$ implies the same equality for $y^{\prime},$ no matter
 what $\beta$ is.
 \item If $|S\cap Y|=1$, $i\not\in Y$ then $y_{*}(S)+y_{*}(\{i\})=0+0=0$
 and (\ref{eq.a2}) for $y$ implies what is desired, for the same reason.
 \item If $|S\cap Y|=1$, $i\in Y$ then $y^{\prime}(S)+y^{\prime}(\{i\})=
 y(S)-\beta\cdot y_{*}(S)+y(\{ i\})-\beta\cdot y_{*}(\{i\})=
 y(S)-\beta\cdot 0+y(\{ i\})-\beta\cdot 1=
 y(S)+y(\{ i\})-\beta$. However, $y(\{ i\})-\beta\geq 0$ by (\ref{eq.a4}) and
 $y(S)\geq 0$ by (\ref{eq.a6}), which implies what is desired.
 \item If $S\cap Y=\emptyset$, $S\not\in\calA$ then $i\not\in Y$ and
 $y_{*}(S)+y_{*}(\{i\})=0+0=0$
 and (\ref{eq.a2}) for $y$ implies what is desired,
 \item If $S\cap Y=\emptyset$, $S\in\calA$ then $i\not\in Y$ and
 by (\ref{eq.4b}) $y^{\prime}(S)+y^{\prime}(\{i\})=
 y(S)-\beta\cdot y_{*}(S)+y(\{ i\})-\beta\cdot y_{*}(\{i\})=
 y(S)-\beta\cdot 1+0-\beta\cdot 0= y(S)-\beta$. The desired inequality
 follows from (\ref{eq.a5}).
 \end{itemize}

 To show that $y^{\prime}$ satisfies (\ref{eq.a3}), that is,
 $y^{\prime}(S)+y^{\prime}(\{i\})-y^{\prime}(S\setminus\{ i\})\geq 0$ for
 $S\subseteq N$, $|S|\geq 3$ and $i\in S$ we distinguish seven cases.
 \begin{itemize}
 \item If $S\cap Y=\emptyset$, $S\setminus\{ i\}\in\calA$ then $S\in\calA$
 and $i\not\in Y$. Thus, $y_{*}(S)+y_{*}(\{i\})-y_{*}(S\setminus\{ i\})=(+1)+0-(+1)=0$
 and (\ref{eq.a3}) for $y$ implies the same inequality for $y^{\prime}$.
 \item If $S\cap Y=\emptyset$, $S\not\in\calA$ (which implies
 $S\setminus\{ i\}\not\in\calA$) then
 $y_{*}(S)+y_{*}(\{i\})-y_{*}(S\setminus\{ i\})=0+0-0=0$
 and (\ref{eq.a3}) for $y$ implies what is desired.
 \item If $S\cap Y=\emptyset$, $S\in\calA$, $S\setminus\{ i\}\not\in\calA$
 then $y(\{i\})=0=y(S\setminus\{ i\})$ and we can write
 $y^{\prime}(S)+y^{\prime}(\{i\})-y^{\prime}(S\setminus\{ i\})=
 y(S)-\beta\cdot y_{*}(S)+y(\{ i\})-\beta\cdot y_{*}(\{i\})-y(S\setminus\{ i\})
 +\beta\cdot y_{*}(S\setminus\{ i\})=
 y(S)-\beta\cdot 1+0-\beta\cdot 0-0 +\beta\cdot 0=
 y(S)-\beta$, which is non-negative by (\ref{eq.a5}).
 \item If $S\cap Y\neq\emptyset$, $i\not\in Y$ then
 $S\cap Y=(S\setminus\{ i\})\cap Y$ and
 $y_{*}(S)+y_{*}(\{i\})-y_{*}(S\setminus\{ i\})=
 (+1-|S\cap Y|)+0-(+1-|(S\setminus\{ i\})\cap Y|)=0$.
 Thus, (\ref{eq.a3}) for $y$ implies what is desired.
 \item If $S\cap Y\neq\emptyset$, $i\in Y$, $(S\setminus\{ i\})\cap Y\neq\emptyset$
 then $|S\cap Y|=1+|(S\setminus\{ i\})\cap Y|$ and
 $y_{*}(S)+y_{*}(\{i\})-y_{*}(S\setminus\{ i\})=
 (+1-|S\cap Y|)+(+1)-(+1-|(S\setminus\{ i\})\cap Y|)=0$.
 Thus, (\ref{eq.a3}) for $y$ implies what is desired.
 \item If $S\cap Y\neq\emptyset$, $i\in Y$, $(S\setminus\{ i\})\cap Y=\emptyset$,
 $S\setminus\{ i\}\in\calA$
 then $|S\cap Y|=1$ and
 $y_{*}(S)+y_{*}(\{i\})-y_{*}(S\setminus\{ i\})=
 (+1-1)+(+1)-(+1)=0$ and (\ref{eq.a3}) for $y$ implies what is desired.
 \item If $S\cap Y\neq\emptyset$, $i\in Y$, $(S\setminus\{ i\})\cap Y=\emptyset$,
 $S\setminus\{ i\}\not\in\calA$ then also $|S\cap Y|=1$ and
 $y(S\setminus\{ i\})=0$, which allows us to write
 $y^{\prime}(S)+y^{\prime}(\{i\})-y^{\prime}(S\setminus\{ i\})=
 y(S)-\beta\cdot y_{*}(S)+y(\{ i\})-\beta\cdot y_{*}(\{i\})-y(S\setminus\{ i\})
 +\beta\cdot y_{*}(S\setminus\{ i\})=
 y(S)-\beta\cdot (1-1)+y(\{ i\})-\beta\cdot 1-0 +\beta\cdot 0=
 y(S)+y(\{ i\})-\beta$. However, $y(\{ i\})-\beta\geq 0$ by (\ref{eq.a4}) and
 $y(S)\geq 0$ by (\ref{eq.a6}), which implies what is desired.
 \end{itemize}

 Thus, $y^{\prime}$ satisfies (\ref{eq.a1})-(\ref{eq.a3}) and,
 because of the choice of $\beta$, $y^{\prime}(T)=0$ for at least
 one $T\in\calA_{\min}$ and $|\calA_{y^{\prime}}|<|\calA_{y}|$,
 which concludes the induction step. Indeed, realize that, by (\ref{eq.extreme-ray}),
 $y_{*}(T)\equiv y_{\calA}(T)=0$ for $T\in {\cal P}_{1}(N)\setminus\calA$.
 \end{proof}

 Now, Lemma \ref{lem.4} allows us to re-formulate the requirement
 (\ref{eq.problem-3}) in the form of finitely many conditions on
 $\u$:
 \begin{equation}
 \forall\, \emptyset\neq\calA\subseteq {\cal P}_{1}(N)
 ~\mbox{closed under supersets} \quad
 \boldb_{\usc}^{\top}y_{\calA}\geq 0\,.
 \label{eq.problem-4}
 \end{equation}
 {\small\sl
 Indeed, if $y\in {\dv R}^{m}$ is such that $\boldA^{\top}y\geq
 0$ and $y=\sum \lambda_{\calA}\cdot y_{\calA}$,
 $\lambda_{\calA}\geq 0$, then
 $\boldb_{\usc}^{\top}y=
 \sum \lambda_{\calA}\cdot\boldb_{\usc}^{\top} y_{\calA}\geq 0$. }

 It remains to reformulate, given such an $\calA$, the condition
 $\boldb_{\usc}^{\top}y_{\calA}\geq 0$. Assuming $|N|\geq 2$, denote
 for this purpose $A\equiv\{i\in N;\ \{i\}\in\calA\}$ and write using the
 definition of $\boldb_{\usc}$ and $y_{\calA}$ from
 (\ref{eq.extreme-ray}):
 \small
 \begin{eqnarray*}
 0 &\leq& \boldb_{\usc}^{\top}y_{\calA} =
 \sum_{|T|\geq 1} \bepa_{\usc}(T)\cdot y_{\calA}(T) =
 \sum_{|T|=1} y_{\calA}(T) + \sum_{|T|\geq 2}
 \{\delta_{N}(T)-\us(T)\}\cdot y_{\calA}(T)\\
 &=&
 \sum_{|T|=1} y_{\calA}(T) +  y_{\calA}(N) -
 \sum_{|T|\geq 2}\us(T)\cdot y_{\calA}(T)\\
 &=& |A| + \underbrace{(1-|A|)}_{y_{\calA}(N)} -
 \sum_{|T|\geq 2}\us(T)\cdot y_{\calA}(T) = 1 -
 \sum_{|T|\geq 2}\us(T)\cdot y_{\calA}(T)\,.
 \end{eqnarray*}
 \normalsize
 Thus, $\boldb_{\usc}^{\top}y_{\calA}\geq 0$ is equivalent to
 $\sum_{|T|\geq 2}\u(T)\cdot y_{\calA}(T)\leq 1$. To get even
 more elegant form of it, assume $\u$ satisfies (\ref{eq.standardize})
 and observe
 \begin{eqnarray*}
 \lefteqn{\sum_{|T|\geq 2} \u(T)\cdot |T\cap A| =
 \sum_{|T|\geq 2} \u(T)\cdot \sum_{i\in A} \delta(i\in T) =
 \sum_{|T|\geq 2}\, \sum_{i\in A}\, \u(T)\cdot \delta(i\in T)}\\
 &=& \sum_{i\in A}\,  \sum_{|T|\geq 2} \u(T)\cdot \delta(i\in T)=
 \sum_{i\in A} \, \sum_{|T|\geq 2,\,i\in T} \u(T)
 \stackrel{(\ref{eq.standardize})}{=} \sum_{i\in A} -\u(\{i\})\,.
 \end{eqnarray*}
 Therefore, we can write by (\ref{eq.extreme-ray}):
 \begin{eqnarray*}
 \lefteqn{\sum_{|T|\geq 2} \u(T)\cdot y_{\calA}(T) =
 \sum_{|T|\geq 2} \u(T)\cdot \delta(T\in\calA) -
 \sum_{|T|\geq 2} \u(T)\cdot |T\cap A|}\\
 &=&\sum_{|T|\geq 2} \u(T)\cdot \delta(T\in\calA) + \sum_{i\in A}\,
 \u(\{i\})= \sum_{|T|\geq 1} \u(T)\cdot \delta(T\in\calA)
 = \sum_{T\in\calA} \u(T)\,,
 \end{eqnarray*}
 which means that $\boldb_{\usc}^{\top}y_{\calA}\geq 0$ is
 equivalent to $\sum_{T\in\calA} \u(T)\leq 1$. Thus, under
 validity of (\ref{eq.standardize}), (\ref{eq.problem-4})
 is equivalent to (\ref{eq.specific-con}) and we have:

 \begin{corollary}\label{cor.nonneg-to-speci}
 Provided $|N|\geq 2$, the condition (\ref{eq.problem-1}) for
 $\u\in {\dv R}^{{\calP}}$ is equivalent to the simultaneous validity of
 (\ref{eq.standardize}) and (\ref{eq.specific-con}).
 \end{corollary}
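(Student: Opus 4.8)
The plan is to recognize that the corollary is the culmination of the chain of equivalences developed immediately above it, and to assemble these links in order while tracking carefully where the equality constraints $(\ref{eq.standardize})$ are actually used. The skeleton of the argument is the chain
$$
(\ref{eq.problem-1})\ \Leftrightarrow\ (\ref{eq.problem-2})\ \Leftrightarrow\ (\ref{eq.problem-3})\ \Leftrightarrow\ (\ref{eq.problem-4})\ \Leftrightarrow\ (\ref{eq.specific-con}),
$$
in which the first three links hold unconditionally and only the last one invokes $(\ref{eq.standardize})$.

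First I would record that $(\ref{eq.problem-1})$ is merely a restatement of $(\ref{eq.problem-2})$: with the matrix $\boldA$ of $(\ref{eq.def-A-matrix})$ and the vector $\boldb_{\usc}$, the non-negativity request becomes $\boldeta\geq 0$, the constraints $(\ref{eq.jaa-equal})$ become the rows of $\boldA\boldeta=\boldb_{\usc}$ indexed by singletons, and the matching $\u^{\sboldeta}(T)=\u(T)$ for $|T|\geq 2$ becomes the remaining rows. Next, Farkas' lemma (Corollary 7.1d in \cite{Sch86}) converts the solvability statement $(\ref{eq.problem-2})$ into the dual statement $(\ref{eq.problem-3})$. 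Then, rewriting $\boldA^{\top}y\geq 0$ as the explicit inequalities $(\ref{eq.a1})$--$(\ref{eq.a3})$ and invoking Lemma \ref{lem.4} -- which says that the cone of admissible $y$ is generated by the rays $y_{\calA}$ of $(\ref{eq.extreme-ray})$ -- I would reduce the universally quantified implication in $(\ref{eq.problem-3})$ to testing $\boldb_{\usc}^{\top}y_{\calA}\geq 0$ only on these generators, which is exactly $(\ref{eq.problem-4})$. Each of these three links is a genuine equivalence, so the chain may later be read in either direction.

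The remaining link, from $(\ref{eq.problem-4})$ to $(\ref{eq.specific-con})$, is where the equality constraints enter, and bookkeeping about them -- rather than any new inequality -- is the part I expect to require the most care. Assuming $(\ref{eq.standardize})$, the telescoping computation above evaluates $\boldb_{\usc}^{\top}y_{\calA}$ to $1-\sum_{T\in\calA}\u(T)$, so $\boldb_{\usc}^{\top}y_{\calA}\geq 0$ is precisely $\sum_{T\in\calA}\u(T)\leq 1$ as $\calA$ ranges over all non-empty classes of non-empty subsets of $N$ closed under supersets. To obtain the clean two-sided statement of the corollary I must also explain why $(\ref{eq.problem-1})$ forces $(\ref{eq.standardize})$: any $\boldeta$ satisfying $(\ref{eq.jaa-equal})$ produces a vector $\u^{\sboldeta}$ that itself satisfies $(\ref{eq.standardize})$ (as shown in \S\,\ref{sssec.eta-to-stan}), and since such a standardized vector is determined by its components on sets of cardinality at least two, on which $\u$ agrees with $\u^{\sboldeta}$, the vector $\u$ inherits $(\ref{eq.standardize})$. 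Conversely, given $(\ref{eq.standardize})$ together with $(\ref{eq.specific-con})$, reversing the computation recovers $(\ref{eq.problem-4})$ and then, running the equivalences backwards, produces the required $\boldeta$. The one point that must not be glossed over is that the generators $y_{\calA}$ truly exhaust the dual cone; but this is exactly the content of Lemma \ref{lem.4}, so no additional work is needed here.
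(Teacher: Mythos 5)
Your proposal follows exactly the route the paper takes: the chain (\ref{eq.problem-1}) $\Leftrightarrow$ (\ref{eq.problem-2}) $\Leftrightarrow$ (\ref{eq.problem-3}) $\Leftrightarrow$ (\ref{eq.problem-4}), with Farkas' lemma supplying the second link, Lemma \ref{lem.4} the third, and (\ref{eq.standardize}) entering only in the final rewriting of $\boldb_{\usc}^{\top}y_{\calA}\geq 0$ as $\sum_{T\in\calA}\u(T)\leq 1$. You correctly locate where the equality constraints are used, and this part of your argument is sound and matches the paper.

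The one place where you go beyond the paper is also the one place where your argument does not hold up: the claim that (\ref{eq.problem-1}) by itself forces (\ref{eq.standardize}). Condition (\ref{eq.problem-1}) only requires $\u(T)=\u^{\sboldeta}(T)$ for $|T|\geq 2$, so it places no constraint at all on $\u(\emptyset)$ or $\u(\{j\})$; your inference that $\u$ ``inherits'' (\ref{eq.standardize}) because a standardized vector is determined by its components on sets of cardinality at least two is circular --- it presupposes that $\u$ is standardized, which is exactly what you are trying to show. This is really an imprecision inherited from the statement rather than a defect of your method: the paper frames the task as finding inequalities which \emph{together with} (\ref{eq.standardize}) characterize the image of $\J^{\prime}$, so the corollary is meant to be read for $\u$ whose components on sets of cardinality at most one are already fixed by (\ref{eq.standardize}). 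Under that reading your chain is complete; as literally stated for arbitrary $\u\in{\dv R}^{\calP}$, the implication you try to supply is not provable, and you should either restrict to standardized $\u$ or strengthen (\ref{eq.problem-1}) to require agreement for all $T$.
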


 \subsubsection{Remarks on the matrix \boldA }\label{sssec.A-hermite}

 Consider again the $m\times n$ matrix $\boldA$ defined in (\ref{eq.def-A-matrix});
 recall that $m=2^{|N|}-1$ and $n=|N|\cdot 2^{|N| -1}$. We have observed
 in \S\,\ref{sssec.trans-elem-stand} that $\boldA$ plays a central role in
 the transition from $\boldeta$-vectors to standard imsets. Now, we show
 that $\boldA$ has full row rank by deriving its its Hermite normal form
 (see \S\,4.1 in \cite{Sch86} for this concept).

 \begin{proposition}\label{prop.A-matrix}
 The matrix $\boldA$ has Hermite normal form $[\boldI \; \boldzero ]$,
 where $\boldI$ is the $m\times m$ identity matrix and $\boldzero$
 the $m\times (n-m)$ zero matrix.
 \end{proposition}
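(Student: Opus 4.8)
The plan is to derive the Hermite normal form constructively, by exhibiting a square $m\times m$ submatrix of $\boldA$ that is unimodular and already almost triangular, and then clearing the remaining columns. First I would read off the columns of $\boldA$ explicitly from (\ref{eq.def-A-matrix}). Writing $\delta_{T}$ for the unit $m$-vector supported on the row indexed by a non-empty set $T$, a short case analysis on $|B|$ gives three shapes: the column $(i|\emptyset)$ equals $\delta_{\{i\}}$; the column $(i|\{j\})$ equals $\delta_{\{i\}}+\delta_{\{i,j\}}$; and, for $|B|\geq 2$, the column $(i|B)$ equals $\delta_{\{i\}}+\delta_{\{i\}\cup B}-\delta_{B}$. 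The one subtle point here is that the $-\delta_{B}$ term is present exactly when $|B|\geq 2$, because for $|B|\leq 1$ the value $\delta_{B}(T)$ vanishes on every row with $|T|\geq 2$; the boundary cases $B=\emptyset$ and $|B|=1$ must therefore be treated separately.

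Next I would single out, for every non-empty $T\subseteq N$, the column indexed by $(i_{T}|B_{T})$ with $i_{T}\equiv\min T$ and $B_{T}\equiv T\setminus\{i_{T}\}$. Since $\{i_{T}\}\cup B_{T}=T$, distinct sets $T$ give distinct column indices, so this selects exactly $m=2^{|N|}-1$ columns, forming a square submatrix $\boldA_{0}$. Ordering both the rows and these columns by increasing cardinality of the indexing set, the explicit formulas above show that $\boldA_{0}$ is lower triangular with every diagonal entry equal to $+1$: the distinguished term $\delta_{T}$ sits on the diagonal, while the auxiliary terms $\delta_{\{i_{T}\}}$ and (when present) $-\delta_{B_{T}}$ lie in rows of strictly smaller cardinality, hence strictly earlier in the order. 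Consequently $\det\boldA_{0}=1$, so $\boldA_{0}$ is unimodular and, in particular, $\boldA$ has full row rank $m$.

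To finish, I would use that a square integral matrix of determinant $\pm 1$ is unimodular, so $\boldA_{0}^{-1}$ is integral. After permuting the columns of $\boldA$ so that the selected ones come first, write $\boldA=[\boldA_{0}\;\boldA_{1}]$; right-multiplying by the unimodular block matrix with diagonal blocks $\boldA_{0}^{-1}$ and $\boldI$ and upper-right block $-\boldA_{0}^{-1}\boldA_{1}$ sends it to $[\boldI\;\boldzero]$. Since column permutation and right multiplication by a unimodular matrix are precisely the operations allowed in computing the Hermite normal form, and $[\boldI\;\boldzero]$ is itself in that form, uniqueness of the Hermite normal form (see \S\,4.1 in \cite{Sch86}) identifies $[\boldI\;\boldzero]$ as the Hermite normal form of $\boldA$. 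I expect the only real obstacle to be bookkeeping rather than conceptual: getting the three column formulas right in the boundary cases, and checking that every auxiliary entry of each selected column genuinely falls strictly below the diagonal under the cardinality ordering---this strict decrease in cardinality is exactly what makes the triangular structure, and hence the whole reduction, go through.
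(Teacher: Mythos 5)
Your argument is correct and takes essentially the same route as the paper's: both exploit the fact that the column indexed by $(i|B)$ equals $\delta_{\{i\}\cup B}$ plus basic vectors supported on strictly smaller sets, the paper packaging this as an induction that writes each $\delta_{T}$ as an integral combination of columns, and you as unitriangularity of the selected $m\times m$ submatrix $\boldA_{0}$ (which, under your ordering, is upper rather than lower triangular, since the auxiliary entries sit in earlier rows---immaterial for the determinant). If anything, your finish---right-multiplying by an explicit integral block matrix of determinant one and invoking uniqueness of the Hermite normal form---is a slightly more careful version of the paper's appeal to further elementary column operations.
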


 \begin{proof}
 The columns of $\boldA$ are indexed by pairs $(i|B)$ and given by
 $$
 \begin{array}{ll}
 \boldA_{(i|\emptyset)} = \delta_{\{i\}} \qquad\qquad &
  \mbox{for $i\in N$},\\
 \boldA_{(i|j)} = \delta_{\{i\}} + \delta_{\{i,j\}}  &
 \mbox{for $i,j\in N$, $i\neq j$}\\
 \boldA_{(i|B)} = \delta_{\{i\}} - \delta_{B} + \delta_{\{i\}\cup B}
 & \mbox{for $i\in N$, $B\subseteq N\setminus\{ i\}$, $|B|\geq 2$}.
 \end{array}
 $$
 Thus, $\delta_{\{i\}}=\boldA_{(i|\emptyset)}$ and $\delta_{\{i,j\}}=\boldA_{(i|j)} -
 \boldA_{(i|\emptyset)}$. To show by induction on $|T|\geq 1$ that
 $\delta_{T}$ can be written as an integer combination
 of columns of $\boldA$, assume $|T|\geq 3$ and choose a pair
 $(i|B)$ with $T=\{ i\}\cup B$, $|B|\geq 2$. Then
 $$
 \delta_{T} = \delta_{\{i\}\cup B}= \boldA_{(i|B)} - \delta_{\{i\}}
 + \delta_{B},
 $$
 where, by the induction hypothesis, the terms $\delta_{\{i\}}$ and $\delta_{B}$ can be
 written as integer combination of the columns of \boldA.

 Thus, using elementary columns operations, $\boldA$ can be transformed such that it
 contains all $m$ elementary column vectors $\delta_{T}$. Using additional
 column operations, all other columns can be zeroed out. Therefore, using
 elementary column operations, $\boldA$ can be transformed to the form
 $[\boldI \; \boldzero ]$.
 \end{proof}

 Before writing this report, we verified computationally that $\boldA$ is unimodular,
 strongly unimodular, strongly k-modular, however not totally unimodular
 for $ 3 \leq |N|\leq 6$ using software written by Matthias Walther available at
 \url{https://github.com/xammy/unimodularity-test}.
 This led us to a hypothesis that $\boldA$ is unimodular for any
 $|N|$. In \S\,\ref{sssec.remark-char}, we confirm this hypothesis.

 \subsubsection{Translation to the framework of characteristic
 imsets}\label{sssec.trans-elem-char}

 We observed in \S\,\ref{sssec.trans-elem-stand} that Jaakkola et al.'s
 elementary constraints (\ref{eq.jaa-non-neg})-(\ref{eq.jaa-equal}) are
 transformed into $\u$-constraints as (\ref{eq.standardize})-(\ref{eq.specific-con}).
 Transforming (\ref{eq.standardize})-(\ref{eq.specific-con}) into
 $\c$-constraints is a simpler task because of the one-to-one
 correspondence $\u\leftrightarrow\c$ (see \S\,\ref{sec:charimset}).
 We already know that (\ref{eq.standardize}) takes the form of tacit
 restrictions on $\c$-vectors (\ref{eq.equal-char}).
 As concerns the specific inequality constraints (\ref{eq.specific-con}),
 we show below that every such inequality, for
 $\emptyset\neq\calA\subseteq {\cal P}_{1}(N)$ closed under
 supersets, is transformed into the framework of $\c$-vectors as follows:
 \begin{equation}
 0\leq \sum_{S\subseteq N} \kappa_{\calA}(S)\cdot\c (S)\,,
 \label{eq.spec-char}
 \end{equation}
 where the coefficients $\kappa_{\calA}(-)$ are given by
 \begin{equation}
 \kappa_{\calA}(S)\equiv\sum_{T\in \calA,\, T\subseteq S}
 (-1)^{|S\setminus T|}\qquad \mbox{for $S\subseteq N$}\,.
 \label{eq.kappa-def}
 \end{equation}
 However, the formula (\ref{eq.kappa-def}) is not suitable to
 compute the coefficients. It is more appropriate to introduce
 them equivalently in terms of the class $\calA_{\min}\equiv\calI$
 of minimal sets in $\calA$. More specifically, let us introduce the class
 $\calC (\calI )$ of possible unions of sets from a non-empty class
 $\calI\subseteq {\cal P}_{1}(N)$ of incomparable sets:
 $$
 \calC (\calI )\equiv\{ S\subseteq N;\ \exists\,
 \emptyset\neq\calK\subseteq\calI \quad
 \mbox{such that $S=\bigcup_{T\in\calK} T$}\}.
 $$
 Then can can compute the coefficients $\kappa_{\calA}(-)$
 recursively as follows:
 \begin{equation}
 \begin{array}{lcl}
 \kappa_{\calA}(S)=0 && \mbox{if $S\subseteq N$,
 $S\not\in\calC (\calI )$},\\[0.3ex]
 \kappa_{\calA}(S)= 1-\sum\limits_{T\in\calC (\calI ),\, T\subset S}
 \kappa_{\calA}(T) && \mbox{for $S\in\calC (\calI )$}\,.
 \end{array}
 \label{eq.kappa-recur}
 \end{equation}
 This implies that $\kappa_{\calA}(S)=1$ for $S\in\calA_{\min}=\calI$
 and that $\kappa_{\calA}$ has the more zeros the smaller
 $|\calA_{\min}|$ is. Therefore, in the framework of characteristic imsets,
 it is more convenient to ascribe the (transformed) specific inequality
 constraints directly to classes $\emptyset\neq\calI\subseteq {\cal P}_{1}(N)$
 of incomparable sets.

 \begin{lemma}\label{lem.5}
 Let $\u$ and $\c$ be imsets related by (\ref{eq.portrait})-(\ref{eq.characteristic})
 and $\emptyset\neq\calA\subseteq{\cal P}_{1}(N)$ a class of sets closed under
 supersets. Then the inequality (\ref{eq.specific-con}) corresponding to
 $\calA$ has the form (\ref{eq.spec-char}), where the coeficients
 $\kappa_{\calA}(-)$ are given by (\ref{eq.kappa-recur}).
 \end{lemma}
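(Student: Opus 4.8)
The plan is to prove the identity by substituting the characteristic-to-standard inverse transform into the claimed inequality and matching it against the specific inequality~(\ref{eq.specific-con}). Recall from~(\ref{eq.portrait}) that $\p(S)=\sum_{T,\,S\subseteq T\subseteq N}\u(T)$, and from~(\ref{eq.characteristic}) that $\c(S)=1-\p(S)$. The starting point is the specific inequality $\sum_{T\in\calA}\u(T)\le 1$. The natural strategy is to express each $\u(T)$ in terms of the portrait values (equivalently, the $\c$-values) via M\"obius inversion. Since $\calA$ is closed under supersets, for each $T\in\calA$ one has $\u(T)=\sum_{D,\,T\subseteq D\subseteq N}(-1)^{|D\setminus T|}\,\p(D)$ by~(\ref{eq.inverse}). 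Substituting this and interchanging the order of summation, I would collect the coefficient of each $\p(S)$, obtaining $\sum_{T\in\calA}\u(T)=\sum_{S\subseteq N}\big(\sum_{T\in\calA,\,T\subseteq S}(-1)^{|S\setminus T|}\big)\p(S)=\sum_{S\subseteq N}\kappa_{\calA}(S)\,\p(S)$, which is precisely the defining formula~(\ref{eq.kappa-def}) for $\kappa_{\calA}$.

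Having rewritten the left-hand side as $\sum_S\kappa_{\calA}(S)\p(S)$, the next step is to replace $\p(S)$ by $1-\c(S)$ and show that the constant terms combine to give exactly the form~(\ref{eq.spec-char}). I would compute $\sum_S\kappa_{\calA}(S)\p(S)=\sum_S\kappa_{\calA}(S)-\sum_S\kappa_{\calA}(S)\c(S)$. The key auxiliary fact I need is that $\sum_{S\subseteq N}\kappa_{\calA}(S)=1$. This follows from the M\"obius-type definition~(\ref{eq.kappa-def}): summing $\kappa_{\calA}(S)$ over all $S$ and swapping the order gives $\sum_{T\in\calA}\sum_{S,\,T\subseteq S\subseteq N}(-1)^{|S\setminus T|}=\sum_{T\in\calA}\delta_{T}(N)=1$, since the inner alternating sum vanishes unless $T=N$, and $N\in\calA$ because $\calA$ is non-empty and closed under supersets. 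Hence $\sum_{T\in\calA}\u(T)=1-\sum_S\kappa_{\calA}(S)\c(S)$, so the inequality $\sum_{T\in\calA}\u(T)\le 1$ is equivalent to $0\le\sum_S\kappa_{\calA}(S)\c(S)$, which is~(\ref{eq.spec-char}).

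The remaining task is to verify that the coefficients defined by~(\ref{eq.kappa-def}) coincide with those produced by the recursion~(\ref{eq.kappa-recur}) over the union-closure $\calC(\calI)$. This is where I expect the main work to lie. I would argue in two parts. First, for $S\notin\calC(\calI)$, I would show $\kappa_{\calA}(S)=0$: since $\calA=\{R;\exists\,T\in\calI,\ T\subseteq R\}$, the sets $T\in\calA$ with $T\subseteq S$ are exactly those containing some minimal set from $\calI$; if $S$ is not a union of sets of $\calI$, the indicator of ``$T\in\calA$'' restricted to $T\subseteq S$ factors in a way that makes the alternating sum~(\ref{eq.kappa-def}) telescope to zero, most cleanly via an inclusion--exclusion over which elements of $S$ are ``covered'' by $\calI$. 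Second, for $S\in\calC(\calI)$, I would establish $\sum_{T\in\calC(\calI),\,T\subseteq S}\kappa_{\calA}(T)=1$, which rearranges to the recursion; combined with the vanishing on the complement of $\calC(\calI)$, this reduces to showing $\sum_{T\subseteq S}\kappa_{\calA}(T)=1$ for every $S\in\calC(\calI)$, and by M\"obius inversion this is equivalent to $\kappa_{\calA}=$ the M\"obius transform of the indicator of $\{S;S\in\calA\}$ up from below, which is exactly~(\ref{eq.kappa-def}).

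The hard part will be the combinatorial identity showing $\kappa_{\calA}(S)=0$ precisely off $\calC(\calI)$; the cleanest route is likely to fix $S$ and sum $(-1)^{|S\setminus T|}$ over $T\in\calA$ with $T\subseteq S$ by grouping according to $\calK\equiv\{T\in\calI;\ T\subseteq S\}$ and using an inclusion--exclusion argument on $\bigcup_{T\in\calK}T$, so that the sum collapses to $\delta(S=\bigcup_{T\in\calK}T)$ up to sign. Everything else is routine M\"obius bookkeeping that I would not grind through in detail.
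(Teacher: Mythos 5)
Your first two steps coincide with the paper's own proof: substitute the inverse formula (\ref{eq.inverse}) into $\sum_{T\in\calA}\u(T)\leq 1$, swap the order of summation to identify the coefficient of $\p(S)$ as the $\kappa_{\calA}(S)$ of (\ref{eq.kappa-def}), and use $\sum_{S}\kappa_{\calA}(S)=1$ to trade $\p$ for $\c$ and arrive at (\ref{eq.spec-char}). Where you genuinely diverge is in proving that (\ref{eq.kappa-def}) satisfies the recursion (\ref{eq.kappa-recur}). The paper partitions ${\cal P}(N)$ into classes $\calB_{\calK}$ indexed by subclasses $\calK\subseteq\calI$ and runs a double induction (on $|\calK|$, and inside that on $|S|$) to show simultaneously that $\kappa$ vanishes on $\calB_{\calK}\setminus\{S_{\calK}\}$ and that the recursion holds at $S_{\calK}$. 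You instead reduce everything to two facts: (a) $\kappa_{\calA}(S)=0$ for $S\notin\calC(\calI)$, and (b) $\sum_{T\subseteq S}\kappa_{\calA}(T)=\delta(S\in\calA)$, the latter being immediate M\"obius inversion (the paper proves the same identity, its (\ref{eq.kappa}), by restricting $\calA$ to subsets of $S$). Given (a), the recursion for $S\in\calC(\calI)\subseteq\calA$ is just (b) rearranged. This decomposition is tidier and avoids the induction over $\calB_{\calK}$ entirely.

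Two caveats. First, your stated mechanism for (a) --- that the alternating sum ``collapses to $\delta(S=\bigcup_{T\in\calK}T)$ up to sign'' --- is false as a statement about the value of $\kappa_{\calA}$: for $\calI=\{ab,ac,bc\}$ one has $\kappa_{\calA}(abc)=-2$, and $abc$ is exactly such a union. What you actually need, and what is true, is only the vanishing when $S\neq\bigcup_{T\in\calK}T$ with $\calK=\{T\in\calI;\ T\subseteq S\}$; this follows from a one-line involution rather than a delicate inclusion--exclusion. Pick $x\in S\setminus\bigcup_{T\in\calK}T$; for $T\subseteq S$, membership of $T$ in $\calA$ is unaffected by adding or removing $x$, so the terms of (\ref{eq.kappa-def}) cancel in pairs $T\leftrightarrow T\triangle\{x\}$. (The case $\calK=\emptyset$ is the already-established $\kappa_{\calA}(S)=0$ for $S\notin\calA$.) Second, you leave this ``hard part'' as a sketch; with the involution supplied the argument closes, so the gap is one of execution rather than strategy.
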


 \begin{proof}
 The first observation is that the coefficients given by
 (\ref{eq.kappa-def}) satisfy
 \begin{equation}
 \kappa_{\calA}(S)=0 ~~\mbox{for $S\subseteq N$, $S\not\in\calA$}, \quad
 \mbox{and}\quad \sum_{S\subseteq N} \kappa_\calA(S)=\sum_{S\in \calA} \kappa_\calA(S)=1\,.
 \label{eq.sum-kappa}
 \end{equation}
 To verify it realize that $\calA\subseteq\calP$ is
 closed under supersets and write:
 \begin{eqnarray*}
 \sum_{S\in \calA} \kappa_{\calA}(S) &=& \sum_{S\in \calA} \
 \sum_{T\in \calA,\, T\subseteq S} (-1)^{|S\setminus T|}
 = \sum_{T\in \calA} \ \sum_{S\in \calA,\, T\subseteq S}  (-1)^{|S\setminus T|}\\
 &=& \sum_{T\in \calA} \ \sum_{S,\, T\subseteq S\subseteq N}  (-1)^{|S\setminus T|}
 = \sum_{T\in \calA} \delta_{N}(T) =1\,.
 \end{eqnarray*}
 To see that (\ref{eq.specific-con}) is transformed into (\ref{eq.spec-char})
 we substitute the inverse formula (\ref{eq.inverse}) into it and use
 the fact $\calA$ is closed under supersets:
 \begin{eqnarray*}
 1 &\geq & \sum_{T\in\calA} \u (T)
 = \sum_{T\in\calA} \ \sum_{S,\, T\subseteq S\subseteq N}
 (-1)^{|S\setminus  T|}\cdot \p (S)\\
 &=&  \sum_{S\in\calA} \ \sum_{T\in \calA,\, T\subseteq S}\p (S)\cdot (-1)^{|S\setminus
 T|}
 =  \sum_{S\in \calA} \p (S)\cdot
 \underbrace{\sum_{T\in \calA,\, T\subseteq S}(-1)^{|S\setminus
 T|}}_{\kappa_\calA(S)}\,.
 \end{eqnarray*}
 Thus, substitute (\ref{eq.sum-kappa}) in that inequality and
 get
 $$
 0\leq 1-\sum_{S\in\calA} \p (S)\cdot\kappa_{\calA}(S) \stackrel{(\ref{eq.sum-kappa})}{=}
 \sum_{S\in\calA} \kappa_{\calA}(S)
 -\sum_{S\in\calA} \kappa_{\calA}(S)\cdot\p (S) =
 \sum_{S\in\calA} \kappa_{\calA}(S)\cdot
 \underbrace{[1- \p (S)]}_{\c (S)}\,,
 $$
 which is, owing to (\ref{eq.characteristic}) and (\ref{eq.sum-kappa}),
 nothing but (\ref{eq.spec-char}).

 It remains to show that (\ref{eq.kappa-def}) takes the form
 (\ref{eq.kappa-recur}). An auxiliary fact is
 \begin{equation}
 \forall\, S\in {\cal A} \qquad
 \sum_{T\subseteq S} \kappa_{\calA} (T)=1\,.
 \label{eq.kappa}
 \end{equation}
 Indeed, to see it, consider the class
 ${\cal A}_{S}\equiv\{ T\subseteq S;\ T\in {\cal A}\}$, which is a
 class of subsets of $S$, closed under supersets. Moreoever,
 for any $T\in {\cal A}_{S}$, one has $\kappa_{\cal A}(T)=\kappa_{{\cal A}_{S}}(T)$,
 which implies by (\ref{eq.sum-kappa}) applied to ${\cal A}_{S}$
 and $S$ in place of $N$ that
 $$
 1= \sum_{T\in {\calA}_{S}} \kappa_{{\cal A}_{S}}(T)=
 \sum_{T\in {\cal A},\, T\subseteq S} \kappa_{\cal A}(T) =
 \sum_{T\subseteq S} \kappa_{\calA} (T)\,.
 $$
 In the rest of the proof we write $\calI$ in place of ${\calA}_{\min}$
 and omit the index in $\kappa_{\calA}(-)$ and write $\kappa (-)$ only.
 For every $\calK\subseteq\calI$ we introduce the class of sets
 whose only subsets in $\calI$ are elements of $\calK$:
 $$
 \calB_{\calK}\equiv \{ S\subseteq N;\ K\subseteq S ~\mbox{for
 $K\in\calK$} ~~ \& ~~ L\setminus S\neq\emptyset
 ~\mbox{for $L\in\calI\setminus\calK$}
 ~\}.
 $$
 Of course, it may happen that $\calB_{\calK}$ is empty for some
 $\calK\subseteq\calI$. Nevertheless, the collection of
 classes $\calB_{\calK}$, where $\calK$ runs over subsets of $\calI$,
 form a partition of $\calP$.  Moreover, every non-empty class
 $\calB_{\calK}$ has the least set (in sense of inclusion),
 namely $S_{\calK}\equiv\bigcup_{T\in\calK} T$.
 Observe that $\calK=\emptyset$ leads to a non-empty class
 $\calB_{\emptyset}=\calP\setminus\calA$ with $S_{\emptyset}=\emptyset$.
 Since $\calI$ consists of incomparable sets, every $S\in\calI$ belongs to
 just one $\calB_{\calK}$ with $|\calK |=1$, namely $\calK=\{ S\}$.
 The class $\calC (\calI )$ defined above (\ref{eq.kappa-recur})
 then coincides with $\{ S_{\calK};\
 \emptyset\neq\calK\subseteq\calI  ~~\mbox{with}
 ~\calB_{\calK}\neq\emptyset\,\}$.

 An easy consequence of (\ref{eq.kappa-def}) is that $\kappa (S)=0$ for
 $S\in\calB_{\emptyset}$ (= $S\not\in\calA$) and $\kappa(S)=1$ for $S\in\calI$.
 To verify (\ref{eq.kappa-recur}) it is enough to show by induction
 on $|\calK|$ the following two statements:
 \begin{description}
 \item[(i)] $\kappa(S)=0$ for $S\in\calB_{\calK}$, $S\neq
 S_{\calK}$,
 \item[(ii)] $\forall\, |\calK|\geq 1 ~\mbox{with $\calB_{\calK}\neq\emptyset$} \quad
 1= \sum_{\calL\subseteq\calK ,\, \calB_{\calL}\neq\emptyset}\, \kappa(S_{\calL})$.
 \end{description}
 Indeed, this is because for $\calL ,\calK\subseteq\calI$ with
 $\calB_{\calL}\neq\emptyset\neq\calB_{\calK}$ one has
 $\calL\subseteq\calK$ if and only if $S_{\calL}\subseteq S_{\calK}$.
 We already know this is true in case $|\calK|=0$. Now assume
 $|\calK|\geq 1$ and the statements hold for any
 $\calL\subset\calK$. Consider arbitrary $S\in\calB_{\calK}$ and
 write using (\ref{eq.kappa}) and the fact that subsets
 of $S$ must belong to $\calB_{\calL}$ for $\calL\subseteq\calK$:
 \begin{eqnarray}
 1 &\stackrel{(\ref{eq.kappa})}{=}& \sum_{T\subseteq S} \kappa(T) =
 \sum_{\calL\subseteq\calK ,\, \calB_{\calL}\neq\emptyset} \,
 \sum_{T\subseteq S,\, T\in\calB_{\calL}} \kappa(T) \nonumber\\
 &=& \sum_{T\subseteq S,\, T\in\calB_{\calK}} \kappa(T) +
 \sum_{\calL\subset\calK ,\, \calB_{\calL}\neq\emptyset} \,
 \sum_{T\subseteq S,\, T\in\calB_{\calL}} \kappa(T)\,.
 \label{eq.kap-1}
 \end{eqnarray}
 Now, observe that the induction premise (i) applied to
 any $\calL\subset\calK$, $\calB_{\calL}\neq\emptyset$ says that
 $\kappa$ vanishes in $\calB_{\calL}$ except for $S_{\calL}$. In particular, for any
 ${\cal D}\subseteq\calB_{\calL}$ with $S_{\calL}\in {\cal D}$
 one has $\sum_{T\in {\cal D}} \kappa(T)=\kappa (S_{\calL})$.
 This implies that the second term in (\ref{eq.kap-1}) is
 $\sum_{\calL\subset\calK ,\, \calB_{\calL}\neq\emptyset}
 \kappa (S_{\calL})$ and we have observed that
 \begin{equation}
 \forall\, S\in\calB_{\calK} \qquad
 \sum_{T\subseteq S,\, T\in\calB_{\calK}} \kappa(T)
 = 1-\sum_{\calL\subset\calK ,\, \calB_{\calL}\neq\emptyset} \kappa (S_{\calL}),
 \label{eq.kap-2}
 \end{equation}
 which means the function $S\mapsto \sum_{T\subseteq S,\, T\in\calB_{\calK}}
 \kappa(T)$ is constant on $\calB_{\calK}$. This allows one to
 derive (i) for $\calK$, for instance, by induction on $|S|$ for
 $S\in\calB_{\calK}$. If we apply (\ref{eq.kap-2}) to $S=S_{\calK}$ we get
 (ii) for $\calK$.
 \end{proof}

 \begin{example}\label{exa.6}\rm
 Take $N=\{a,b,c\}$ and classify types of considered classes $\calA$,
 specified by $\calA_{\min}$. Using (\ref{eq.kappa-recur}) we get the
 corresponding inequalities (\ref{eq.spec-char}):
 \begin{itemize}
 \item ${\cal A}_{\min}=\{ abc\}$ leads to $\kappa_{\cal A}(abc)=1$
 and $\kappa_{\cal A}(S)=0$ otherwise. This gives the constraint
 $0\leq \c(abc)$,
 \item ${\cal A}_{\min}=\{ ab\}$ leads to $\kappa_{\cal A}(ab)=1$
 (and $\kappa_{\cal A}(S)=0$ otherwise), which gives the constraint
 $0\leq \c(ab)$,
 \item ${\cal A}_{\min}=\{ ab,ac\}$ leads to $\kappa_{\cal A}(ab)=\kappa_{\cal A}(ac)=1$
 and $\kappa_{\cal A}(abc)=-1$, which gives the constraint
 $0\leq \c(ab)+\c(ac)-\c(abc)$,
 \item ${\cal A}_{\min}=\{ ab,ac,bc\}$ leads to
 $\kappa_{\cal A}(ab)=\kappa_{\cal A}(ac)=\kappa_{\cal A}(bc)=1$
 and $\kappa_{\cal A}(abc)=-2$, which gives the constraint
 $0\leq \c(ab)+\c(ac)+\c(bc)-2\c(abc)$,
 \item ${\cal A}_{\min}=\{ c\}$ leads to $\kappa_{\cal A}(c)=1$
 which gives $0\leq \c(c)$, which is a vacuous constraint because
 of $\c(c)=1$ implied by (\ref{eq.equal-char}),
 \item ${\cal A}_{\min}=\{ c,ab\}$ leads to $\kappa_{\cal A}(c)=\kappa_{\cal A}(ab)=1$
 and $\kappa_{\cal A}(abc)=-1$, and then to $0\leq \c(c)+\c(ab)-\c(abc)$, which leads
 after the substitution $\c(c)=1$ to $0\leq 1+\c(ab)-\c(abc)$,
 \item ${\cal A}_{\min}=\{ a,b\}$ leads to $\kappa_{\cal A}(a)=\kappa_{\cal A}(b)=1$
 and $\kappa_{\cal A}(ab)=-1$, and then, after substituing $\c(i)=1$, to $0\leq 2-\c(ab)$,
 \item ${\cal A}_{\min}=\{ a,b,c\}$ leads to
 $\kappa_{\cal A}(a)=\kappa_{\cal A}(b)=\kappa_{\calA}(c)=1$,
 $\kappa_{\cal A}(ab)=\kappa_{\cal A}(ac)=\kappa_{\calA}(bc)=-1$ and
 $\kappa_{\cal A}(abc)=1$, which gives, after the substitution $\c(i)=1$,
 $0\leq 3-\c(ab)-\c(ac)-\c(bc)+\c(abc)$.
 \end{itemize}
 Thus, we see that the non-vacuous constraints are identical
 with the transformed elementary $\boldeta$-constraints -- see Example \ref{exa.5}.
 \end{example}

 \subsubsection{Remarks on the characteristic
 transformation}\label{sssec.remark-char}

 Let us consider the characteristic transformation given by
 (\ref{eq.eta-to-char}) -- see \S\,\ref{sssec.eta-to-char}. It can be viewed as a mapping
 $\boldeta\mapsto\boldB\boldeta$, where $\boldB$ is an $m\times n$
 matrix, whose entries $\boldb\, [\,S,(i|B)\,]$ are specifed
 as follows: for $|S|\geq 1$, $i\in N$, $B\subseteq N\setminus\{ i\}$,
 \begin{equation}
 \boldb\, [\,S,(i|B)\,] =
 \delta (\,i\in S \ \& \ S\setminus\{ i\}\subseteq B\,)
 \equiv \delta (\, S\subseteq\{ i\}\cup B\,)- \delta (\, S\subseteq B\,)\,.
 \label{eq.def-B-matrix}
 \end{equation}
 There is a close relation to the matrix $\boldA$ introduced in
 (\ref{eq.def-A-matrix}). Indeed, there exists an invertible unimodular
 $m\times m$ matrix $\boldC$ such that $\boldB =\boldC\boldA$.
 More specifically, the entries $\boldc\, [\,S,T\,]$ of $\boldC$ for
 non-empty sets $S,T\subseteq N$ are given by
 $$
 \boldc\, [\,S,T\,] =\left\{
 \begin{array}{ll}
 \delta (\,S\subseteq T\,) & \mbox{if $|S|\geq 2$},\\
 \delta (\,S=T\,) & \mbox{if $|S|=1$}.
 \end{array}
 \right.
 $$
 To see it write for fixed $S\subseteq N$, $|S|\geq 2$ and a pair $(i|B)$ with help
 of (\ref{eq.def-A-matrix}):
 \begin{eqnarray*}
 \sum_{T\neq\emptyset} \boldc\, [\,S,T\,]\cdot\bolda\, [\,T,(i|B)\,]
 &=&\sum_{T\supseteq S} \bolda\, [\,T,(i|B)\,]
 = \sum_{T\supseteq S} [\,\delta_{\{i\}\cup B}(T)-\delta_{B}(T)\,]\\
&=& \sum_{T\supseteq S} \delta_{\{i\}\cup B}(T)
 - \sum_{T\supseteq S} \delta_{B}(T)\\
  &=& \delta(S\subseteq \{ i\}\cup B)  - \delta(S\subseteq B)
   =
  \boldb\, [\,S,(i|B)\,]\,.
 \end{eqnarray*}
 Analogously, for $S\subseteq N$, $|S|=1$ one has
 \small
 $$
 \sum_{T\neq\emptyset}  \boldc\,[\,S,T\,]\cdot\bolda\, [\,T,(i|B)\,] =
 \sum_{T=S} \bolda\, [\,T,(i|B)\,] = \bolda\, [\,S,(i|B)\,]
 =\delta_{\{ i\}}(S)=\boldb\, [\,S,(i|B)\,]\,.
 $$
 \normalsize
 We leave to the reader to verify that the $m\times m$-matrix
 $\boldD$ with entries $\boldd\, [\,T,R\,]$ for non-empty $T,R\subseteq N$ given by
 $$
 \boldd\, [\,T,R\,] =\left\{
 \begin{array}{ll}
 \delta (\,T\subseteq R\,)\cdot (-1)^{|R\setminus T|} & \mbox{if $|T|\geq 2$},\\
 \delta (\,T=R\,) & \mbox{if $|T|=1$}.
 \end{array}
 \right.
 $$
 is an inverse matrix to $\boldC$. Since both $\boldC$ and its
 inverse $\boldD$ are integral matrices, they are both unimodular.
 The following observation appears to be important.

 \begin{lemma}\label{lem.6}
 Both the matrix $\boldA$ given by (\ref{eq.def-A-matrix}) and the matrix $\boldB$
 given by (\ref{eq.def-B-matrix}) are full row rank unimodular matrices.
 \end{lemma}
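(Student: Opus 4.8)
The plan is to establish both claims for $\boldA$ first and then transfer everything to $\boldB$ through the factorization $\boldB=\boldC\boldA$. Full row rank of $\boldA$ is immediate from Proposition \ref{prop.A-matrix}: its Hermite normal form $[\boldI\;\boldzero]$ has exactly $m$ nonzero (pivot) columns, so $\boldA$ has rank $m$, which is its number of rows. Since $\boldC$ is an invertible $m\times m$ matrix and $\boldB=\boldC\boldA$, the matrix $\boldB$ has the same rank $m$ and hence also full row rank. For unimodularity, observe that for any set $J$ of $m$ columns one has $\boldB_{J}=\boldC\,\boldA_{J}$, because $\boldB=\boldC\boldA$ acts column-wise; therefore $\det\boldB_{J}=\det\boldC\cdot\det\boldA_{J}=\pm\det\boldA_{J}$, using $\det\boldC=\pm1$. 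Consequently $\boldA$ is unimodular if and only if $\boldB$ is, and it suffices to prove that one of them is unimodular.

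It remains to show that every $m\times m$ submatrix $\boldA_{J}$ satisfies $\det\boldA_{J}\in\{-1,0,+1\}$. My approach is an induction that peels off the row indexed by the full set $N$. Reading off the columns from (\ref{eq.def-A-matrix}), the entry of column $(i|B)$ in row $N$ is nonzero exactly when $\{i\}\cup B=N$, i.e.\ when $B=N\setminus\{i\}$, in which case it equals $+1$; thus row $N$ carries a single $+1$ in each of the $|N|$ columns $(i|N\setminus\{i\})$, $i\in N$, and zeros elsewhere. Fix a minor $\boldA_{J}$. If $J$ contains none of these columns, row $N$ of $\boldA_{J}$ vanishes and $\det\boldA_{J}=0$. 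Otherwise pick one such column $c^{*}=(i^{*}|N\setminus\{i^{*}\})\in J$, and from every other column $(i|N\setminus\{i\})\in J$ subtract $c^{*}$. This leaves the determinant unchanged, produces a zero in row $N$, and, since both columns have $+1$ in row $N$, replaces the column by a vector supported on strictly smaller sets. Now row $N$ has a unique nonzero entry $+1$, in column $c^{*}$, and Laplace expansion along row $N$ gives $\det\boldA_{J}=\pm\det M$, where $M$ is obtained by deleting row $N$ and column $c^{*}$. One then argues that $M$ again has maximal minors in $\{-1,0,+1\}$, so that $\det\boldA_{J}\in\{-1,0,+1\}$ as well.

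The main obstacle is closing this induction. The reduced matrix $M$ is built partly from original columns and partly from the modified columns $(i|N\setminus\{i\})-c^{*}$, which are no longer of the form (\ref{eq.def-A-matrix}); hence the induction cannot be run on the literal family of matrices $\boldA$ but must be set up on a larger class that is closed under the reduction above and in which each column still possesses a unique inclusion-maximal support set with coefficient $\pm1$. Verifying this closure — equivalently, recognising the column matroid of $\boldA$ as regular — is where the real work lies. It is essential to bound \emph{every} maximal minor rather than merely to exhibit one unimodular basis, and one cannot shortcut this via total unimodularity: as noted in \S\ref{sssec.A-hermite}, $\boldA$ is not totally unimodular for $|N|\geq 3$, so intermediate entries arising during elimination need not stay in $\{-1,0,+1\}$. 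I also stress that Proposition \ref{prop.A-matrix} by itself is insufficient: the Hermite normal form $[\boldI\;\boldzero]$ only says that the columns of $\boldA$ generate $\dv{Z}^{m}$ (equivalently, that the maximal minors have greatest common divisor $1$), which is necessary but far from sufficient for unimodularity. The substantive content of the lemma is precisely the uniform bound $|\det\boldA_{J}|\leq 1$, and once it is secured for $\boldA$ the factorization $\boldB=\boldC\boldA$ yields it for $\boldB$ at once.
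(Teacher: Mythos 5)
Your reduction steps are fine: full row rank of $\boldA$ from Proposition \ref{prop.A-matrix}, the transfer between $\boldA$ and $\boldB$ via the invertible unimodular factor (the paper goes in the direction $\boldA=\boldD\boldB$ and works with $\boldB$, but the equivalence is the same), and your warnings are well taken --- the Hermite normal form only gives that the maximal minors have gcd $1$, and total unimodularity genuinely fails, so no shortcut through either is available. But the proposal stops exactly where the lemma starts: your induction ``peel off row $N$, reduce, and argue that the reduced matrix again has maximal minors in $\{-1,0,+1\}$'' is never closed. You acknowledge yourself that the reduced columns leave the family defined by (\ref{eq.def-A-matrix}) and that one would need to identify a larger class closed under the reduction, or equivalently establish regularity of the column matroid; none of that is carried out. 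As written, the argument establishes full row rank and the equivalence of unimodularity of $\boldA$ and $\boldB$, but not unimodularity of either, which is the substantive content of the statement.

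For comparison, the paper closes this gap by a factorization rather than an elimination induction. It passes to $\boldB$, relabels the columns $(i|B)$ with $B\neq\emptyset$ as pairs $(C\colon B)$ with $C=\{i\}\cup B$, so that the column becomes $\delta(S\subseteq C)-\delta(S\subseteq B)$, and then \emph{adds} columns $\delta(S\subseteq R)$ for all $R$ with $|R|\geq 2$ (adding columns can only make unimodularity harder, so it suffices to treat the extension). The extended matrix factors as $\bar{\boldB}\boldE$, where $\bar{\boldB}$ is the $m\times m$ ``zeta'' matrix with entries $\delta(S\subseteq R)$, invertible and unimodular with explicit M\"obius inverse $\boldf\,[\,R,U\,]=\delta(R\subseteq U)(-1)^{|U\setminus R|}$, and $\boldE$ is a matrix each of whose columns is either a unit vector or a difference of two unit vectors; after appending a dummy row for $\emptyset$ it is a network matrix, hence totally unimodular. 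Every maximal minor of the extension is then $\det\bar{\boldB}$ times a minor of $\boldE$, i.e.\ lies in $\{-1,0,+1\}$. If you want to salvage your approach, the fastest route is to adopt precisely this change of basis: conjugating by the zeta matrix is what turns the columns of $\boldB$ into incidence vectors of a network, and that is the structural fact your induction is implicitly trying to rediscover.
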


 \begin{proof}
 Since $\boldA =\boldD\boldB$ where $\boldD$ is an invertible unimodular
 $m\times m$-matrix, it is enough to show that $\boldB$ is unimodular.
 By Proposition \ref{prop.A-matrix} and $\boldB =\boldC\boldA$ we
 already know that $\boldB$ has full row rank.

 To show it is unimodular we re-label its columns and add some new ones.
 The original columns of $\boldB$ corresponding to pairs $(i|B)$ with
 $B\neq\emptyset$ are re-labelled by pairs $(C:B)$ of sets $\emptyset\neq
 B\subseteq C\subseteq N$ with $|C\setminus B|=1$; that is, $(i|B)$
 is replaced by $(C:B)$ where $C=\{i\}\cup B$. The formula
 (\ref{eq.def-B-matrix}) implies
 $$
 \boldb\, [\,S,(C:B)\,] =
 \delta (\, S\subseteq C\,)- \delta (\, S\subseteq B\,)
 \qquad  \mbox{for $S\subseteq N$, $|S|\geq 1$}.
 $$
 The original column corresponding to a pair $(i|\emptyset)$, $i\in N$
 is re-labelled by a singleton set $R=\{ i\}$. Note that the column
 has the form $\delta_{R}$. The newly added columns are labelled
 by sets $R\subseteq N$, $|R|\geq 2$ and defined as follows:
 $$
 \boldb\, [\,S,R\,] = \delta (\, S\subseteq R\,)
 \qquad  \mbox{for $S\subseteq N$, $|S|\geq 1$}.
 $$
 Observe that this formula also holds in case $|R|=1$. Now, it is
 enough to show that the extended matrix $\boldB$ is unimodular.

 Let $\bar{\boldB}$ denote the $m\times m$-submatrix of $\boldB$
 corresponding to columns labelled by sets $\emptyset\neq R\subseteq N$.
 It follows from the above description of columns in $\boldB$ that
 $\boldB =\bar{\boldB}\boldE$ where the matrix $\boldE$ has the
 entries $\bolde\,[\,T,R\,]$ for $\emptyset\neq T,R\subseteq N$
 and $\bolde\,[\,T,(C:B)\,]$ for $\emptyset\neq T\subseteq N$,
 $\emptyset\neq B\subseteq C\subseteq N$, $|C\setminus B|=1$
 specified as follows:
 $$
 \begin{array}{lcl}
 \bolde\, [\,T,R\,] &=& \delta (\, T=R\,)\,, \\
 \bolde\, [\,T,(C:B)\,] &=& \delta (\, T=C\,)- \delta (\,
 T=B\,)\,.
 \end{array}
 $$
 Therefore, it is enough to show that $\bar{\boldB}$ is invertible
 unimodular matrix and $\boldE$ totally unimodular (cf. Theorem
 21.6 in \cite{Sch86}). We leave to the reader to verify that the
 inverse matrix $\boldF$ to $\bar{\boldB}$ has the entries
 $$
 \boldf\, [\,R,U\,] =\delta (\, R\subseteq U\,)\cdot (-1)^{|U\setminus R|}
 \qquad \mbox{for  $\emptyset\neq R,U\subseteq N$}.
 $$
 Since $\bar{\boldB}$ has integral inverse $\boldF$, it is
 unimodular. The matrix $\boldE$ is totally unimodular because it is the
 restriction of a network matrix (cf.\ \S\,19.3 of \cite{Sch86}).
 More specifically, one can add one dummy row to $\boldE$,
 labelled by $S=\emptyset$: put $\bolde\,[\,\emptyset ,R\,]=-1$
 for $\emptyset\neq R\subseteq N$ and $\bolde\,[\,\emptyset ,(C:B)\,]=0$
 for any pair $(C:B)$. We obtain a matrix with entries in $\{ -1,0,+1\}$
 such that each of its columns contains exactly once $+1$ and
 exactly once $-1$. As mentioned in the statement (18) of \S\,19.3
 in \cite{Sch86}, such a matrix is totally unimodular. Of course,
 it remains totally unimodular if the row corresponding to $S=\emptyset$
 is again removed.
 \end{proof}

 \subsection{Transformation of cluster
 inequalities}\label{ssec.trans-cluster}

 Luckily, these inequalites transform nicely to the framework of
 imsets.

 \begin{lemma}\label{lem.7}
 Provided $\boldeta$ satisfies (\ref{eq.jaa-equal}),
 the cluster inequality (\ref{eq.cluster-1}) for $C\subseteq N$, $|C|\geq 2$
 can be re-written either in terms of $\u$-vectors as
 \begin{equation}
 \sum_{T\subseteq N,\, |C\cap T|\geq 2} \,\, \u(T)\cdot (|C\cap T|-1)\geq 0\,,
 \label{eq.cluster-2}
 \end{equation}
 or in terms of $\c$-vectors as
 \begin{equation}
 |C|-1- \sum_{S\subseteq C,\, |S|\geq 2} ~
 \c(S)\cdot (-1)^{|S|}\geq 0\,.
 \label{eq.cluster-3}
 \end{equation}
 \end{lemma}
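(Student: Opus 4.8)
The plan is to prove both reformulations simultaneously by routing through the characteristic imset, since the inverse relation $\u\mapsto\c$ of (\ref{eq.portrait})--(\ref{eq.characteristic}) turns alternating sums over subsets of $C$ into something transparent. Concretely, I would establish the chain
$$
\sum_{T\subseteq N,\,|C\cap T|\geq 2}\u(T)\cdot(|C\cap T|-1)
= |C|-1-\sum_{S\subseteq C,\,|S|\geq 2}(-1)^{|S|}\c(S)
= \Big(\sum_{i\in C}\sum_{D\subseteq N\setminus C}\eta(i|D)\Big)-1 .
$$
Once this is in hand, the left quantity is the left-hand side of (\ref{eq.cluster-2}), the middle quantity is the left-hand side of (\ref{eq.cluster-3}), and both equal the cluster sum minus $1$; hence each is non-negative precisely when the cluster inequality (\ref{eq.cluster-1}) holds, which is exactly what is claimed.

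For the first equality I would start from $\c(S)=1-\p(S)$ with $\p(S)=\sum_{T\supseteq S}\u(T)$, split off the constant part using $\sum_{S\subseteq C,\,|S|\geq 2}(-1)^{|S|}=|C|-1$ (obtained from $(1-1)^{|C|}=0$ after subtracting the $|S|\leq 1$ terms), and then interchange summation order in the remaining term $\sum_{S\subseteq C,\,|S|\geq 2}(-1)^{|S|}\sum_{T\supseteq S}\u(T)$. For fixed $T$ the inner alternating sum ranges over $S\subseteq C\cap T$ and collapses, by the truncated binomial identity $\sum_{S\subseteq C\cap T,\,|S|\geq 2}(-1)^{|S|}=\max(|C\cap T|-1,0)$, to exactly the weight $(|C\cap T|-1)$ appearing in (\ref{eq.cluster-2}), with the terms $|C\cap T|\leq 1$ contributing nothing. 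This identifies the two imset expressions.

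For the second equality I would invoke Lemma \ref{lem.3} to express each $\c(S)$ through $\boldeta$ and evaluate $\Phi\equiv\sum_{S\subseteq C,\,|S|\geq 1}(-1)^{|S|}\c(S)$. Substituting (\ref{eq.eta-to-char}) and interchanging sums, the coefficient of a fixed $\eta(i|B)$ is, for $i\in C$, a signed sum over sets $S$ with $i\in S\subseteq C$ and $S\setminus\{i\}\subseteq B$; writing $S'=S\setminus\{i\}$ this is $-\sum_{S'\subseteq B\cap(C\setminus\{i\})}(-1)^{|S'|}$, which by the same binomial cancellation vanishes unless $B\cap C=\emptyset$ and otherwise equals $-1$. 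Hence $\Phi=-\sum_{i\in C}\sum_{D\subseteq N\setminus C}\eta(i|D)$. Since (\ref{eq.equal-char}) gives $\c(S)=1$ for singletons, we also have $\Phi=-|C|+\sum_{S\subseteq C,\,|S|\geq 2}(-1)^{|S|}\c(S)$, and combining these two expressions for $\Phi$ yields the second equality in the chain.

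The routine parts are the two summation interchanges. The only place demanding care is the bookkeeping of the boundary cases $|S|\leq 1$ and $|C\cap T|\leq 1$, where the alternating sums must be truncated correctly so that the identity produces $\max(|C\cap T|-1,0)$ rather than $|C\cap T|-1$; getting the sign and the constant offset $|C|-1$ right is what makes the two ends of the chain line up. The main conceptual point -- and the reason the cluster inequality transforms so cleanly -- is recognizing that this truncated alternating sum is precisely the M\"obius-type weight linking the characteristic encoding to the standard imset encoding.
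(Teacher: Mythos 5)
Your proof is correct and follows essentially the same route as the paper: both establish the chain of equalities linking the $\u$-expression, the $\c$-expression, and the cluster sum, with the first link obtained by substituting $\c(S)=1-\sum_{T\supseteq S}\u(T)$ and collapsing the truncated alternating sum to $\max(|C\cap T|-1,0)$. The only (immaterial) difference is in the last link, where the paper substitutes the $\boldeta\mapsto\u$ formula (\ref{eq.jaa-to-stand}) into the $\u$-expression while you substitute the $\boldeta\mapsto\c$ formula (\ref{eq.eta-to-char}) into the $\c$-expression; both reduce to computing the coefficient of each $\eta(i|B)$ and invoking (\ref{eq.jaa-equal}) once.
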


 \begin{proof}
 By (\ref{eq.cluster-1}), it is enough to show that the following equalities hold
 \begin{eqnarray*}
 \underbrace{|C|- \sum_{S\subseteq C,\, |S|\geq 2} ~ \c(S)\cdot (-1)^{|S|}}_{\equiv (\ast )}
 &=& 1+\sum_{T\subseteq N,\, |C\cap T|\geq 2} ~ \u(T)\cdot (|C\cap T|-1)\\
  &=& \sum_{i\in C} ~\sum_{B\subseteq N\setminus\{ i\},\, B\cap C=\emptyset}
 \eta(i|B)\,.
 \end{eqnarray*}
 Let $(\ast)$ denote the first expression there and write by
 (\ref{eq.characteristic})-(\ref{eq.portrait}):
 \small
 \begin{eqnarray*}
 \lefteqn{(\ast ) =
 |C|- \sum_{S\subseteq C,\, |S|\geq 2} ~
 (-1)^{|S|}\cdot [\,1-\sum_{T\supseteq S} \u(T)\,]}\\
 &=& |C|- \underbrace{\sum_{S\subseteq C,\, |S|\geq 2} ~
 (-1)^{|S|}}_{|C|-1} + \sum_{S\subseteq C,\, |S|\geq 2}
 ~ (-1)^{|S|}\cdot \sum_{T\supseteq S} \u(T)\\
 &=& 1+\sum_{S\subseteq C,\, |S|\geq 2} ~\sum_{T\supseteq S}
 \u(T)\cdot ~ (-1)^{|S|}
 = 1+ \sum_{T,\, |C\cap T|\geq 2} \u(T)\cdot
 \underbrace{\sum_{S\subseteq C\cap T,\, |S|\geq 2} (-1)^{|S|}}_{|C\cap
 T|-1}\,.
 \end{eqnarray*}
  \normalsize
 This already proves the first equality. Now, we substitute (\ref{eq.jaa-to-stand})
 in the last expression (note $|T|\geq 2$ for $T$ here) and change the order of summation:
 \footnotesize
 \begin{eqnarray*}
 (\ast ) &=& 1+ \sum_{T,\, |C\cap T|\geq 2} \u(T)\cdot (|C\cap T|-1)\\
 &=& 1+\overbrace{\sum_{T,\, |C\cap T|\geq 2} \delta_{N}(T)\cdot(|C\cap
 T|-1)}^{|C|-1} +
 \sum_{i\in N}\, \sum_{B\subseteq N\setminus\{ i\}} \eta(i|B)\cdot\\
 &&
 \big\{ \sum_{T,\, |C\cap T|\geq 2} \delta_{B}(T)\cdot(|C\cap T|-1)
 - \sum_{T,\, |C\cap T|\geq 2} \delta_{\{ i\}\cup B}(T)\cdot(|C\cap
 T|-1)\,\big\}\\
 &=& |C|+ \sum_{i\in N}\, \sum_{B\subseteq N\setminus\{ i\}}
 \eta(i|B)\cdot \\
 &&\left\{\,\delta(\,|C\cap B|\geq 2\,)\cdot(|C\cap B|-1)
 -\delta(\,|C\cap (\{ i\}\cup B)|\geq 2\,)\cdot(|C\cap (\{ i\}\cup
 B)|-1)\,\right\}.
 \end{eqnarray*}
 \normalsize
 Now, we realize the that the inner expression in braces vanishes
 for $i\not\in C$ because then $C\cap B=C\cap (\{ i\}\cup B)$.
 Analogously, it vanishes if $i\in C$ but $C\cap B=\emptyset$.
 However, in case $i\in C$ and $C\cap B\neq\emptyset$ it equals to
 $-1$. Thus, we write using (\ref{eq.jaa-equal}) for $i\in C$:
 \begin{eqnarray*}
 (\ast ) &=& |C|+ \sum_{i\in N}\, \sum_{B\subseteq N\setminus\{ i\}}
 \eta(i|B)\cdot \delta(\, i\in C \ \& \ C\cap B\neq\emptyset\,)\cdot
 (-1)\\
 &=&  |C|- \sum_{i\in C}\, \sum_{B\subseteq N\setminus\{ i\}}
 \eta(i|B)\cdot \delta(\, C\cap B\neq\emptyset\,)\\
  &\stackrel{(\ref{eq.jaa-equal})}{=}&
  \sum_{i\in C}\, \sum_{B\subseteq N\setminus\{ i\}} \eta(i|B)
  - \sum_{i\in C}\, \sum_{B\subseteq N\setminus\{ i\},\,  B\cap C\neq\emptyset}
 \eta(i|B)\\
  &=&   \sum_{i\in C}\,
  \sum_{B\subseteq N\setminus\{ i\},\,  B\cap C=\emptyset}
  \eta(i|B)\,,
 \end{eqnarray*}
 which gives the third required equality.
 \end{proof}

 \begin{example}\label{exa.7}\rm
 Take $N=\{a,b,c\}$. By (\ref{eq.cluster-2}), there are four
 transformed cluster inequalities for $|C|\geq 2$ breaking into
 two types:
 \begin{itemize}
 \item $\u (\{ a,b\}) +\u (\{ a,b,c\})\geq 0$, \hfill (for $C=\{ a,b\}$)
 \item $\u (\{ a,b\}) +\u (\{ a,c\}) +\u (\{ b,c\}) +2\cdot\u (\{ a,b,c\})\geq 0$.
  \hfill (for $C=\{ a,b,c\}$)
 \end{itemize}
 We observe they coincide with two types of non-specific inequality
 constraints mentioned in Example \ref{exa.3} (see
 \S\,\ref{sssec.our-approx}). Nevertheless, the remaining non-specific
 constrain mentioned there, namely $\u (\{ a,b,c\})\geq 0$, is not
 implied by the transformed cluster inequalities. For instance,
 the $\u$-vector given by $\u(T)=(-1)^{|T|}$ for $T\subseteq \{ a,b,c\}$
 shows that.
 \end{example}

 The above example suggests that the transformed cluster
 inequalities are implied by the non-specific ones, which is indeed
 the case.

 \begin{corollary}\label{cor.cluster-to-nonspec}
 The cluster inequalities transformed to the framework of\/ $\u$-vectors
 (\ref{eq.cluster-2}) follow from non-specific inequality constraints
 (\ref{eq.non-specific-con}).
 \end{corollary}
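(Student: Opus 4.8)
The plan is to exhibit the left-hand side of (\ref{eq.cluster-2}) directly as a non-specific constraint, that is, as $\langle m,\u\rangle$ for a suitable standardized supermodular function $m$. Given $C\subseteq N$ with $|C|\geq 2$, I would set
$$
m_{C}(T)\equiv\max\{\,|C\cap T|-1,\ 0\,\}\qquad\mbox{for }T\subseteq N,
$$
and observe that $m_{C}(T)=|C\cap T|-1$ precisely when $|C\cap T|\geq 2$, and $m_{C}(T)=0$ otherwise. Hence $\sum_{T}m_{C}(T)\cdot\u(T)$ is exactly the left-hand side of (\ref{eq.cluster-2}), so it suffices to show that $m_{C}$ lies in the cone of standardized supermodular functions appearing in (\ref{eq.non-specific-con}).

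Standardization is immediate: if $|T|\leq 1$ then $|C\cap T|\leq 1$, so $m_{C}(T)=0$. For supermodularity I would use the structural observation that $m_{C}=f\circ g$, where $g(T)\equiv|C\cap T|$ is modular (additive) with $g(\emptyset)=0$, and $f(x)\equiv\max\{x-1,\,0\}$ is convex and nondecreasing on the integers. Writing $p\equiv|C\cap E\cap F|$, $q\equiv|C\cap E|-p$ and $r\equiv|C\cap F|-p$, modularity of $g$ gives $g(E)=p+q$, $g(F)=p+r$, $g(E\cap F)=p$ and $g(E\cup F)=p+q+r$, so the supermodular inequality for $m_{C}$ reduces to
$$
f(p+q+r)+f(p)\geq f(p+q)+f(p+r),
$$
with $q,r\geq 0$. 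This is exactly the statement that the increment $x\mapsto f(x+q)-f(x)$ is nondecreasing, which holds because $f$ is convex; equivalently, the consecutive differences of $f$ are $0,1,1,1,\ldots$, which are nondecreasing. Thus $m_{C}$ is supermodular.

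Finally, since the class of standardized supermodular functions on ${\cal P}(N)$ is a pointed rational polyhedral cone, $m_{C}$ is a conic combination $m_{C}=\sum_{j}\lambda_{j}\,m_{j}$ with $\lambda_{j}\geq 0$ of the extreme-ray representatives $m_{j}$ entering (\ref{eq.non-specific-con}). Therefore
$$
\langle m_{C},\u\rangle=\sum_{j}\lambda_{j}\cdot\langle m_{j},\u\rangle\geq 0
$$
follows from the non-specific inequality constraints, which is the assertion of the corollary. The only step requiring genuine care is the supermodularity of $m_{C}$; everything else is bookkeeping. I expect the convexity (mixed second difference) argument to be the crux, though it becomes routine once $m_{C}$ is recognized as a convex function of the modular quantity $|C\cap T|$.
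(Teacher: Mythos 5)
Your proposal is correct and takes essentially the same route as the paper: both identify the left-hand side of (\ref{eq.cluster-2}) as $\langle m_{C},\u\rangle$ with $m_{C}(T)=\max\{0,|C\cap T|-1\}$ and conclude via supermodularity. The paper simply asserts that $m_{C}$ is a (standardized extreme) supermodular function, whereas you supply the missing verification (convex nondecreasing function of a modular one) and, by invoking only membership in the cone rather than extremality, actually need less than the paper claims.
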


 \begin{proof}
 By (\ref{eq.cluster-2}), the cluster inequality for $C\subseteq
 N$, $|C|\geq 2$ has the form
 $$
 \langle m_{C},\u\rangle =\sum_{T\subseteq N} m_{C}(T)\cdot\u (T)\geq 0,
 ~~ \mbox{\rm with} ~
 m_{C}(T)=\max\, \{ 0, |C\cap T|-1\} ~ \mbox{for $T\subseteq N$}.
 $$
 The function $m_{C}$ is a special (standardized extreme)
 supermodular function, and, therefore, the inequality for $C$
 follows from (\ref{eq.non-specific-con}).
 \end{proof}

 Thus, we can summarize. The exact translation of the equality
 constraints (\ref{eq.jaa-equal}) to the framework of $\u$-vectors
 are the equality constraints (\ref{eq.standardize}) -- see
 \S\,\ref{sssec.eta-to-stan}. Provided (\ref{eq.jaa-equal}) is valid,
 the exact translation of non-negativity constraints (\ref{eq.jaa-non-neg}) are specific
 inequality constraints (\ref{eq.specific-con}) (see Corollary
 \ref{cor.nonneg-to-speci} in \S\,\ref{sssec.trans-elem-stand}),
 and by Corollary \ref{cor.cluster-to-nonspec}, the cluster
 inequalities (\ref{eq.cluster-1}) translate to some of the
 non-specific inequality constraints (\ref{eq.non-specific-con}).
 In particular, we have

 \begin{corollary}\label{cor.main}
 The $\u$-polyhedron specified by (\ref{eq.standardize})-(\ref{eq.non-specific-con})
 is contained in the image of the $\boldeta$-polyhedron specified by
 (\ref{eq.jaa-non-neg})-(\ref{eq.cluster-1}) by the mapping
 $\boldeta\mapsto\u^{\sboldeta}$ defined in
 (\ref{eq.jaa-to-stand}),
 which is the polyhedron specified by (\ref{eq.standardize}),
 (\ref{eq.specific-con}) and (\ref{eq.cluster-2}).
 \end{corollary}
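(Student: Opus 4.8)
The plan is to prove the statement in two stages: first identify the image of the $\boldeta$-polyhedron $\J$ under $\boldeta\mapsto\u^{\sboldeta}$ explicitly as the polyhedron given by (\ref{eq.standardize}), (\ref{eq.specific-con}) and (\ref{eq.cluster-2}), and then establish the claimed containment using the corollaries already proved. Both stages reduce to combining Corollary \ref{cor.nonneg-to-speci}, Lemma \ref{lem.7} and Corollary \ref{cor.cluster-to-nonspec}; no new computation is needed.

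First I would characterize the image of $\J$. The polyhedron $\J$ arises from $\J^{\prime}$ (given by (\ref{eq.jaa-non-neg}) and (\ref{eq.jaa-equal})) by adding the cluster inequalities (\ref{eq.cluster-1}). By Corollary \ref{cor.nonneg-to-speci}, a vector $\u\in{\dv R}^{\calP}$ lies in the image of $\J^{\prime}$ if and only if it satisfies (\ref{eq.standardize}) and (\ref{eq.specific-con}); equivalently, the image of $\J^{\prime}$ is exactly the polyhedron cut out by those two families. Next I would use Lemma \ref{lem.7}: whenever $\boldeta$ satisfies the equality constraints (\ref{eq.jaa-equal}), the cluster inequality (\ref{eq.cluster-1}) for a set $C$ is equivalent to (\ref{eq.cluster-2}) written for $\u^{\sboldeta}$, and the latter involves only the components $\u^{\sboldeta}(T)$ with $|T|\geq 2$.

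The key step is to combine these facts. Because (\ref{eq.cluster-2}) depends solely on the components indexed by $|T|\geq 2$, and because any $\boldeta$ realizing $\u$ through $\J^{\prime}$ satisfies $\u^{\sboldeta}(T)=\u(T)$ precisely on those components, whether such a $\boldeta$ additionally obeys (\ref{eq.cluster-1}) is determined entirely by $\u$ itself, namely by whether $\u$ satisfies (\ref{eq.cluster-2}). Running both directions of the equivalence, a vector $\u$ lies in the image of $\J$ if and only if it lies in the image of $\J^{\prime}$ and satisfies (\ref{eq.cluster-2}); that is, if and only if it satisfies (\ref{eq.standardize}), (\ref{eq.specific-con}) and (\ref{eq.cluster-2}). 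This settles the final clause of the statement. For the containment, I would take any $\u$ satisfying (\ref{eq.standardize})-(\ref{eq.non-specific-con}). Such a $\u$ obeys (\ref{eq.standardize}) and (\ref{eq.specific-con}) directly, and by Corollary \ref{cor.cluster-to-nonspec} the transformed cluster inequalities (\ref{eq.cluster-2}) follow from the non-specific constraints (\ref{eq.non-specific-con}); hence $\u$ satisfies all three families characterizing the image of $\J$ and therefore belongs to it.

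I expect the main subtlety to be the middle step, where the existential quantifier over $\boldeta$ must be shown to factor cleanly through the $|T|\geq 2$ components. One must be sure that imposing (\ref{eq.cluster-1}) on $\boldeta$ does not secretly constrain the low-cardinality components of $\u^{\sboldeta}$ in a way not captured by (\ref{eq.cluster-2}). Lemma \ref{lem.7}, together with the fact that (\ref{eq.standardize}) already determines the components with $|T|\leq 1$ from those with $|T|\geq 2$, removes this worry; but this is exactly the point where the many-to-one nature of $\boldeta\mapsto\u^{\sboldeta}$ has to be handled carefully, since otherwise the image of a polyhedron under a linear map need not be described merely by pushing the defining inequalities forward.
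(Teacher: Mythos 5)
Your proposal is correct and follows essentially the same route as the paper, which derives this corollary from the summary combining Corollary \ref{cor.nonneg-to-speci} (image of $\J^{\prime}$), Lemma \ref{lem.7} (translation of cluster inequalities), and Corollary \ref{cor.cluster-to-nonspec} (cluster inequalities follow from non-specific ones). Your explicit handling of the existential quantifier over $\boldeta$ --- noting that (\ref{eq.cluster-2}) depends only on the $|T|\geq 2$ components on which every pre-image agrees with $\u$ --- is a point the paper leaves implicit, and you resolve it correctly.
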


 \section{LP relaxation}\label{sec.LPrelax}

 To motivate the next result consider the case of three variables
 and transform Jaakkola et al.'s polyhedron $\J$ (\S\,\ref{sssec.jaa-approx})
 to the framework of $\c$-vectors.

 \begin{example}\label{exa.8}\rm
 In Example \ref{exa.5} (see \S\,\ref{ssec.trans-elem}), we transformed
 the elementary constraints (\ref{eq.jaa-non-neg})-(\ref{eq.jaa-equal})
 to the framework of characteristic imsets in case $N=\{ a,b,c\}$.
 The result was a polyhedron given by fifteen inequalities and four equality
 constraints. One can add the transformed cluster inequalities
 (\ref{eq.cluster-3}) to those constraints. There are four such
 inequalities breaking into two types:
 \begin{itemize}
 \item $0\leq 1-\c(ab)$, \hfill (for $C=\{ a,b\}$)
 \item $0\leq 2-\c(ab)-\c(ac)-\c(bc)+\c(abc)$.
 \hfill (for $C=\{ a,b,c\}$)
 \end{itemize}
 We computed (again by {\sl Polymake} \cite{Polymake}) the vertices of
 the resulting polyhedron (= the image of $\J$) and got 12 vertices. The type
 representatives are as follows:
 $$
 [0,0,0,0],
 [1,0,0,0],
 [1,1,0,0],
 [1,1,0,1],
 [1,1,1,1],
 [1,1,1,\frac{3}{2}].
 $$
 All the eleven lattice points here are characteristic imsets
 (for acyclic directed graphs), while the fractional vertex $[1,1,1,\frac{3}{2}]$
 is not. However, it is a convex combination of  vertices of the bigger
 polyhedron (= of the image of $\J^{\prime}$), namely of
 $[2,2,2,3]$ and $[0,0,0,0]$ -- see Example \ref{exa.5}.

 To get the exact polyhedral characterization of the characteristic
 imset polytope (= of the convex hull of the set of characteristic
 imsets) in this case $N=\{ a,b,c\}$ one has to add the translation
 of the non-specific inequality constrain $\u (\{ a,b,c\})\geq 0$
 -- see Example \ref{exa.7}. By (\ref{eq.portrait})-(\ref{eq.characteristic}),
 it leads to
 \begin{itemize}
 \item $\c(abc)\leq 1$,
 \end{itemize}
 which clearly cuts off the fractional vertex and the result is just
 the polytope spanned by the remaining eleven lattice points.
 Thus, the example shows that the basic inequalities for
 characteristic imsets mentioned in \S\,\ref{sssec.advantage}
 (Corollary \ref{cor.basic-char}) are not implied by the transformed
 Jaakkola et al.'s inequalities (\ref{eq.jaa-non-neg})-(\ref{eq.cluster-1}).
 \end{example}

 Nevertheless, we have observed that in case $|N|=3$ the only lattice points
 within the transformed polyhedron are the characteristic imsets. This leads
 to a hypothesis that this holds for any $|N|$. We confirm this
 conjecture now using the observation from Lemma \ref{lem.6}.
 Thus, by transforming Jaakkola et al.'s polyhedron $\J$ we
 get an explicit LP relaxation of the characteristic imset polytope.

 \begin{corollary}\label{cor.char-LP-relax}
 The only lattice points within the polyhedron of $\c$-vectors given by
 (\ref{eq.equal-char}), (\ref{eq.spec-char}) and (\ref{eq.cluster-3})
 are characteristic imsets (for acyclic directed graphs).
 \end{corollary}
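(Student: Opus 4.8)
The goal is to show that every lattice point $\c$ satisfying the transformed Jaakkola constraints (\ref{eq.equal-char}), (\ref{eq.spec-char}) and (\ref{eq.cluster-3}) is a characteristic imset of some acyclic directed graph. Since these constraints are the exact transform of Jaakkola et al.'s constraints (\ref{eq.jaa-non-neg})--(\ref{eq.cluster-1}) under the characteristic transformation $\boldeta\mapsto\boldB\boldeta$ (see \S\,\ref{sssec.eta-to-char} and \S\,\ref{sssec.remark-char}), the natural strategy is to pull a given lattice point $\c$ back to an $\boldeta$-vector, invoke Lemma \ref{lem.2} to conclude that this $\boldeta$ encodes an acyclic directed graph, and then push forward to recognize $\c$ as the characteristic imset of that graph. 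The crucial enabling fact is Lemma \ref{lem.6}: the matrix $\boldB$ has full row rank and is unimodular.

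\medskip

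\noindent\textbf{Key steps.} First I would make the bijection between the constraint systems precise. A vector $\c\in {\dv R}^{{\cal P}_{1}(N)}$ satisfies (\ref{eq.equal-char}), (\ref{eq.spec-char}), (\ref{eq.cluster-3}) if and only if there exists $\boldeta\in {\dv R}^{n}$ satisfying (\ref{eq.jaa-non-neg})--(\ref{eq.cluster-1}) with $\c=\boldB\boldeta$; this is essentially the content of Corollary \ref{cor.nonneg-to-speci} (for the elementary part) together with Lemma \ref{lem.7} (for the cluster part). The second and central step is the integrality transfer: given a \emph{lattice} point $\c$ in the $\c$-polyhedron, I must produce a \emph{lattice} point $\boldeta$ in $\J$ with $\boldB\boldeta=\c$. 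Because $\boldB$ is a full row rank unimodular matrix, the preimage system $\boldB\boldeta=\c$ admits an integral solution whenever $\c$ is integral and lies in the image; this is exactly the point where unimodularity is indispensable (by \S\,5 and Corollary 5.3b of \cite{Sch86}, a full row rank unimodular matrix maps the integer lattice onto all integer points of its column space, or equivalently one selects an $m\times m$ unimodular submatrix of $\boldB$, solves there, and extends by zeros). The third step is to apply Lemma \ref{lem.2}: since this integral $\boldeta$ satisfies (\ref{eq.jaa-non-neg})--(\ref{eq.cluster-1}), it is the code $\boldeta_{G}$ of an acyclic directed graph $G$ over $N$. Finally, by Lemma \ref{lem.3} the characteristic transformation sends $\boldeta_{G}$ to the characteristic imset $\c_{G}$, so $\c=\boldB\boldeta_{G}=\c_{G}$, as required.

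\medskip

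\noindent\textbf{Main obstacle.} The delicate point is the integral preimage in step two: knowing that $\c$ is an integer vector in the image of $\boldB$ over the reals does not by itself guarantee an integer $\boldeta$, and an \emph{arbitrary} integral preimage need not satisfy the inequality constraints even if \emph{some} real preimage does. The argument must therefore use unimodularity not merely to find an integral solution of $\boldB\boldeta=\c$, but to ensure one can choose that integral solution inside the polyhedron $\J$ cut out by (\ref{eq.jaa-non-neg})--(\ref{eq.cluster-1}). The clean way around this is to work on the linear subspace fixed by the equality constraints (\ref{eq.jaa-equal}), where by Lemma \ref{lem.1} the polytope $\J^{\prime}$ is integral with the graph-codes as its only lattice points; one then observes that $\boldB$ restricted to this affine slice is still given by a unimodular matrix, so the fibre $\boldB\boldeta=\c$ meeting $\J^{\prime}$ contains an integral point, which is necessarily a vertex $\boldeta_{G}$, and the cluster inequalities (\ref{eq.cluster-1}) are automatically inherited because $\c$ satisfies their transform (\ref{eq.cluster-3}). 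Verifying that this fibre genuinely meets the \emph{integral} lattice, rather than merely the rational polytope, is where the full force of Lemma \ref{lem.6} is consumed.
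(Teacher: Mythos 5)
Your proposal is correct and follows essentially the same route as the paper: pull the lattice point $\c$ back through the full row rank unimodular matrix $\boldB$ (Lemma \ref{lem.6}) to an integral point of $\J$, then apply Lemmas \ref{lem.2} and \ref{lem.3}. The clean form of the integrality-transfer step you wrestle with in your final paragraph is the paper's direct appeal to Theorem 19.2 of \cite{Sch86}: unimodularity of $\boldB$ makes the non-empty polyhedron $\{\boldx\geq 0;\ \boldB\boldx=\c\}$ integral, so it already contains a lattice point satisfying the non-negativity constraints, and the equality and cluster constraints then hold automatically because they are consequences of $\boldB\boldeta=\c$ together with the assumed properties of $\c$.
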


 \begin{proof}
 Let us interpret any $\c$-vector as an element of
 ${\dv R}^{{\cal P}_{1}(N)}\equiv {\dv R}^{m}$, that is,
 $\c(\emptyset )=1$ by a convention.
 We have already observed that the polyhedron given by (\ref{eq.equal-char}),
 (\ref{eq.spec-char}) and (\ref{eq.cluster-3}) is the image
 of the polyhedron $\J$ specified by (\ref{eq.jaa-non-neg})-(\ref{eq.cluster-1})
 by the transformation $\boldeta\mapsto\boldB\boldeta=\c$ defined
 in (\ref{eq.eta-to-char}) -- see \S\,\ref{sssec.trans-elem-char}
 and \S\,\ref{ssec.trans-cluster}.

 Assume $\c$ is a lattice point in the considered polyhedron.
 Thus, $\c$ has a pre-image $\boldx\in\J$, which implies that the
 polyhedron $\{\boldx\geq 0;\ \boldB\boldx=\c\}$ in ${\dv R}^{n}$
 is non-empty. By Lemma \ref{lem.6}, $\boldB$ is unimodular, which
 allows us to use Theorem 19.2 in \cite{Sch86} saying that a full row rank
 $m\times n$-matrix $\boldB$ is unimodular if and only if the polyhedron
 $\{\boldx\geq 0;\ \boldB\boldx=\c\}$ is integral for any
 $\c\in {\dv Z}^{m}$. That means, it is the convex hull of its lattice points.
 In particular, since it is non-empty, it has at least one lattice point.
 Let us fix one such lattice point $\boldeta\in {\dv Z}^{n}$,
 $\boldeta\geq 0$ with $\boldB\boldeta =\c$. It automatically
 satisfies (\ref{eq.jaa-non-neg}); (\ref{eq.jaa-equal}) holds
 because $\c (S)=1$ for $|S|=1$ and $\boldB\boldeta=\c$.
 As (\ref{eq.cluster-3}) holds for $\c$, $\boldeta$ satisfies
 all cluster inequalities (\ref{eq.cluster-1}) (by Lemma \ref{lem.7}).
 That means, $\boldeta$ is a lattice point in $\J$.

 By Lemma \ref{lem.2}, $\boldeta$ is necessarily the code $\boldeta_{G}$ of an
 acyclic directed graph $G$ over $N$. By Lemma \ref{lem.3}, its
 image $\c$ by the characteristic transformation is the
 characteristic imset $\c_{G}$ corresponding to $G$.
 \end{proof}

 \noindent {\em Remark}~ In the proof of Corollary \ref{cor.char-LP-relax},
 we have shown that if $\c$ is a lattice point in the cone
 generated by columns of $\boldB$ then it is a non-negative integer
 combination of columns of $\boldB$. That means, in terms of
 \S\,16.4 of \cite{Sch86}, the columns of $\boldB$ form
 the minimal integral Hilbert basis of the cone generated by them.
 Following the terminology from commutative algebra,
 the semigroup generated by columns of $\boldB$ is {\em normal} \cite{Miller2005,Sturmfels1995,Takemura2007}.
 \medskip

 Nevertheless, because of the one-to-one correspondence between
 $\u$-vectors and $\c$-vectors, we have an analogous result
 in the framework of standard imsets.

 \begin{corollary}\label{cor.stan-LP-relax}
 The polyhedron of $\u$-vectors given by (\ref{eq.standardize}),
 (\ref{eq.specific-con}) and (\ref{eq.cluster-2}) is an LP
 relaxation of the standard imset polytope.
 \end{corollary}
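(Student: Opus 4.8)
The plan is to reduce the statement to the characteristic-imset version already established in Corollary~\ref{cor.char-LP-relax}, transporting everything along the one-to-one correspondence $\u\leftrightarrow\c$ from \S\ref{sec:charimset}. Recall that (\ref{eq.portrait})--(\ref{eq.characteristic}) define an affine bijection $\u\mapsto\c$ whose inverse is the explicit formula (\ref{eq.inverse}). Both directions send lattice points to lattice points: the forward map is the integral portrait transform followed by subtraction from the all-ones vector, and the backward map (\ref{eq.inverse}) is manifestly integral. Consequently a $\u$-vector is a lattice point if and only if its image $\c$-vector is a lattice point. I would then observe that this bijection carries the $\u$-polyhedron defined by (\ref{eq.standardize}), (\ref{eq.specific-con}) and (\ref{eq.cluster-2}) onto the $\c$-polyhedron defined by (\ref{eq.equal-char}), (\ref{eq.spec-char}) and (\ref{eq.cluster-3}). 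This matching is exactly what the preceding results provide: the equality constraints (\ref{eq.standardize}) translate into the tacit restrictions (\ref{eq.equal-char}), Lemma~\ref{lem.5} rewrites each specific inequality (\ref{eq.specific-con}) as the corresponding (\ref{eq.spec-char}), and Lemma~\ref{lem.7} rewrites each transformed cluster inequality (\ref{eq.cluster-2}) as (\ref{eq.cluster-3}). Hence the two polyhedra correspond under the bijection, and, since lattice points are preserved in both directions, their lattice points correspond as well.

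With this identification in hand, I would invoke Corollary~\ref{cor.char-LP-relax}, which asserts that the only lattice points of the $\c$-polyhedron are the characteristic imsets $\c_{G}$ of acyclic directed graphs. Pulling this back along the bijection, the only lattice points of the $\u$-polyhedron are the corresponding standard imsets $\u_{G}$.

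To finish, I would check the LP-relaxation property against its definition, namely that the lattice points of the polyhedron coincide with the lattice points of the standard imset polytope. Each $\u_{G}$ is a vertex of the standard imset polytope (by \cite{SVH10}) and therefore lies in it, so every lattice point of the polyhedron belongs to the polytope; conversely the polytope is contained in the polyhedron because the defining constraints are valid for all standard imsets, so every lattice point of the polytope is a lattice point of the polyhedron. The two sets of lattice points thus coincide. I expect no genuine obstacle: the whole argument is a transport along the already-established correspondence, and the only points requiring care are verifying that the three constraint families really match termwise (supplied by the remark on (\ref{eq.standardize})/(\ref{eq.equal-char}) together with Lemmas~\ref{lem.5} and~\ref{lem.7}, each of which presupposes the relevant equality constraints that do hold on the polyhedra) and confirming that the affine map preserves integrality in both directions, so that the polyhedral bijection restricts to a bijection of lattice points.
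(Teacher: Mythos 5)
Your proposal is correct and follows essentially the same route as the paper: transport the $\u$-polyhedron to the $\c$-polyhedron via the lattice-point-preserving bijection (\ref{eq.portrait})--(\ref{eq.characteristic})/(\ref{eq.inverse}), match the constraint families using the remark on (\ref{eq.standardize})/(\ref{eq.equal-char}) together with Lemmas~\ref{lem.5} and~\ref{lem.7}, and invoke Corollary~\ref{cor.char-LP-relax}. The only difference is that you spell out the final verification of the LP-relaxation definition a bit more explicitly than the paper does, which is harmless.
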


 \begin{proof}
 As explained in \S\,\ref{sec:charimset} the mapping $\u\mapsto\c$
 given by (\ref{eq.portrait})-(\ref{eq.characteristic}) is invertible
 and maps lattice points to lattice points. Moreover,
 (\ref{eq.standardize}) is transformed to (\ref{eq.equal-char}),
 (\ref{eq.specific-con}) to (\ref{eq.spec-char}) by Lemma \ref{lem.5} and
 (\ref{eq.cluster-2}) to (\ref{eq.cluster-3}) by Lemma \ref{lem.7}.
 Thus, the image of the polyhedron is the polyhedron of
 $\c$-vectors from Corollary \ref{cor.char-LP-relax}. The
 pre-images of characteristic imsets are standard imsets.
 \end{proof}

 Note that one can also prove Corollary \ref{cor.stan-LP-relax}
 directly, by the method Corollary \ref{cor.char-LP-relax} was
 proved. Indeed, one can use an analogous consideration where
 the matrix $\boldB$ is replaced by $\boldA$ and the vector $\c$
 by $\boldb_{\usc}$ for an $\u$-vector -- see the relation (\ref{eq.problem-2})
 mentioned in \S\,\ref{sssec.trans-elem-stand}.

 Thus, we have an explicit LP relaxation of the standard imset
 polytope and the conjecture from \cite{SV11} is confirmed:

 \begin{corollary}\label{cor.confirm-LP-relax}
 The polyhedron of $\u$-vectors given by
 (\ref{eq.standardize})-(\ref{eq.non-specific-con})
 is an LP relaxation of the standard imset polytope.
 \end{corollary}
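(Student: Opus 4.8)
The plan is to derive this statement immediately from the two corollaries already established, by sandwiching the polyhedron in question between the standard imset polytope and the LP relaxation of Corollary \ref{cor.stan-LP-relax}. Write $Q_{1}$ for the polyhedron given by (\ref{eq.standardize}), (\ref{eq.specific-con}) and (\ref{eq.cluster-2}), and $Q_{2}$ for the polyhedron given by (\ref{eq.standardize})-(\ref{eq.non-specific-con}) whose lattice points we wish to identify. Let $\Pi$ denote the standard imset polytope, that is, the convex hull of the vectors $\u_{G}$ over acyclic directed graphs $G$ over $N$. The whole argument will hinge on the chain $\Pi\subseteq Q_{2}\subseteq Q_{1}$ together with the already-known description of the lattice points of $Q_{1}$.

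First I would record the inclusion $Q_{2}\subseteq Q_{1}$, which is precisely the content of Corollary \ref{cor.main}; at bottom it holds because the non-specific constraints (\ref{eq.non-specific-con}) imply the cluster inequalities (\ref{eq.cluster-2}) by Corollary \ref{cor.cluster-to-nonspec}, so every point meeting (\ref{eq.standardize})-(\ref{eq.non-specific-con}) also meets the three families defining $Q_{1}$. Second, I would invoke $\Pi\subseteq Q_{2}$: the constraints (\ref{eq.standardize})-(\ref{eq.non-specific-con}) form the outer approximation of \S\ref{sssec.our-approx}, hence are valid for every standard imset $\u_{G}$, and since $Q_{2}$ is convex it contains the convex hull of the $\u_{G}$, namely $\Pi$.

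The conclusion is then a short counting argument on lattice points. On the one hand, every standard imset lies in $\Pi\subseteq Q_{2}$ and is a lattice point, so it is a lattice point of $Q_{2}$. On the other hand, any lattice point of $Q_{2}$ is a lattice point of $Q_{1}$ by the inclusion $Q_{2}\subseteq Q_{1}$, hence a standard imset by Corollary \ref{cor.stan-LP-relax}. Therefore the lattice points of $Q_{2}$ coincide exactly with the standard imsets, which are precisely the lattice points of $\Pi$; this is exactly the assertion that $Q_{2}$ is an LP relaxation of the standard imset polytope.

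I do not expect a genuine obstacle here, since all the real difficulty has been absorbed into Corollary \ref{cor.stan-LP-relax} (which rests on the unimodularity of $\boldB$ from Lemma \ref{lem.6} and the integrality of the polyhedron $\J$) and into Corollary \ref{cor.cluster-to-nonspec}. The only points demanding care are bookkeeping ones. One is the direction of the inclusion: it is essential that the non-specific constraints \emph{imply} the cluster constraints, so that $Q_{2}$ is the \emph{smaller} polyhedron sitting inside the known relaxation $Q_{1}$, and not the other way round. The other is to make sure the validity $\Pi\subseteq Q_{2}$ is actually used, since without it one could not guarantee that \emph{all} standard imsets persist as lattice points of the (possibly much smaller) polyhedron $Q_{2}$.
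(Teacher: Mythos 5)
Your proposal is correct and follows exactly the paper's route: the paper's own proof simply cites Corollaries \ref{cor.stan-LP-relax} and \ref{cor.main}, and your write-up is a faithful unpacking of that one-line argument (the inclusion $Q_{2}\subseteq Q_{1}$ from Corollary \ref{cor.main}, the validity $\Pi\subseteq Q_{2}$, and the identification of lattice points via Corollary \ref{cor.stan-LP-relax}). No gaps.
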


 \begin{proof}
 This follows from Corollaries \ref{cor.stan-LP-relax} and
 \ref{cor.main}.
 \end{proof}

 \section*{Conclusions}

 Corollary \ref{cor.char-LP-relax} gives an explicit LP relaxation
 of the characteristic imset polytope. Nevertheless, some of the
 inequalities (\ref{eq.spec-char}) are superfluous because they
 follow from the remaining inequalities. Moreover, perhaps adding
 the basic inequalities from Corollary \ref{cor.basic-char} allows
 one further reduction of the number of inequalities.

 Another research direction is to look for even more loose LP
 relaxation of the standard/characteristic imset polytope, which
 however, has a less number of inequalities.

 \subsection*{Acknowledgments}
 Milan Studen\'{y} was supported by GA \v{C}R grant 201/08/0539.

 \end{document}